\documentclass[10pt]{amsart}
\usepackage{amssymb, amscd, amsmath, amsthm, epsfig, latexsym, enumerate}

\renewcommand{\geq}{\geqslant}
\renewcommand{\leq}{\leqslant}

\newtheorem{theorem}{Theorem}
\newtheorem{lemma}[theorem]{Lemma}

\newtheorem*{thm}{Theorem}

\newtheorem*{cor*}{Corollary}

\begin{document}
\title{the  $\mathbb{F}_2$-cohomology rings of 3-manifolds}

\author{Jonathan A. Hillman}
\address{School of Mathematics and Statistics\\
     University of Sydney, NSW 2006\\
      Australia }

\email{jonathanhillman47@gmail.com}

\begin{abstract}
We give a new argument for the characterization of the cohomology rings of closed 3-manifolds with coefficients $\mathbb{F}_2$, 
first given by M. M. Postnikov (in terms of intersection rings) in 1948.
\end{abstract}

\keywords{3-manifold, cohomology ring,  {\it mod}-(2) coefficients}

\subjclass{57M45}

%\centerline{{\it Repackaging Sullivan}}

\maketitle
M. M. Postnikov characterized the $\mathbb{F}_2$-homology intersection
rings of closed 3-manifolds in his first published paper \cite{Po48},
and D. Sullivan determined the $\mathbb{Z}$-cohomology rings of closed orientable 
3-manifolds in \cite{Su75}.
The orientable case was settled comprehensively by V. G. Turaev,
who considered not only cohomology with coefficients $\mathbb{Z}/n\mathbb{Z}$ 
for all $n\geq0$,
but determined the interactions of these cohomology rings
with each other and with the torsion linking pairing \cite{Tu83}.

Postnikov showed that the $\mathbb{F}_2$-homology intersection ring of a closed 
3-manifold $M$
is a finite graded $\mathbb{F}_2$-algebra which satisfies 3-dimensional Poincar\'e duality
and the``Postnikov-Wu identity" with respect to a distinguished element $w=w_1(M)$,
and conversely every such ``$MS$-algebra" is isomorphic to such a cohomology ring.
(The terms in quotation marks are defined in \S5 below.)
The constructive part of his argument used induction 
on the rank of the degree-1 component of the algebra,
with bases corresponding to $S^3$ and $S^2\tilde\times{S^1}$,
for the cases $w=0$ and $w\not=0$, respectively.
The inductive step in \cite{Po48} used (in today's terminology) surgery 
on a knot $K$ in a 3-manifold $M$ such that $K$ has trivial image 
in $H_1(M;\mathbb{F}_2)$. 

We review briefly the results of Sullivan and Turaev in \S2 and \S3.
The main body of the paper is in \S4--\S9,
where we give a new argument for Postnikov's result.
The basic idea (superposition of elementary models) is quite simple, 
but its application to nonorientable 3-manifolds involves some effort.
Although we shall use the language of cohomology,  our  calculations rest 
largely upon the intersections of curves and surfaces in a 3-manifold, as in Postnikov's original account.
Our contributions are merely to give a direct link presentation of a suitable 3-manifold, 
suppressing the induction,
and to make more explicit the penultimate paragraph of Postnikov's argument,
which consists of three sentences,
beginning with  ``{\it Okazivayetsya, shto} $\dots$", 
meaning roughly ``It turns out that $\dots$".

When the homology is torsion free the $\mathbb{F}_p$-cohomology ring 
is determined by the integral cohomology ring.
In the final two sections we show that this is not always so when there is torsion.

\section{notation}

If $A$ is a finitely generated abelian group let $tA$ be its torsion subgroup,
and let $A^*=Hom(A,\mathbb{Z})$ and $A^{*n}=Hom(A,\mathbb{Z}/n\mathbb{Z})$,  
for each $n>1$.
If $R$ is another abelian group then there is a canonical epimorphism from $Hom(A,R)$ 
onto $Hom(tA,R)$,  with kernel $Hom(A/tA,R)$.
%This epimorphism is noncanonically split.

If $R$ is a commutative ring and $\{e_1,\dots,e_d\}$ is a basis for a free $R$-module $V$
then the Kronecker dual basis for $V^*=Hom_R(V,R)$ is the basis $\{e_1^*,\dots,e_d^*\}$
determined by $e_i^*(e_i)=1$ and $e_i^*(e_j)=0$ for all $j\not=i$ and all $i$.

If $G$ is a group let $G'$ and $\zeta{G}$ be the commutator subgroup and
centre of $G$, and let $G^{ab}=G/G'$ be its  abelianization.
Let $I(G)$ be the preimage in $G$ of $tG^{ab}$.
Let $X^r(G)$ be the verbal subgroup generated by all $r$th powers $g^r$ with $g\in{G}$.

If $L$ is an $m$-component link let $L_i$ be its $i$th component, 
and let $M(L;\mathcal{F})$ be the closed 3-manifold
obtained by $\mathcal{F}$-framed surgery on $L$.
If each component has the 0-framing we shall write just $M(L)$,
and if each component has framing with slope $p$ we shall write $M(L;p)$.
Spanning surfaces for link components and closed surfaces representing 
Poincar\'e duals of classes in $H^1(M;\mathbb{F}_2)$ may be nonorientable,
and we shall not comment further on this possibility.

Let $L_{2,4}$ and $Bo$ be the (2,4)-torus link and the Borromean rings 3-component link.
(These are the links $4^2_1$ and $6^3_2$ in Rolfsen's tables \cite{Rolf}.)
Let $Bo(n)$ be the link obtained by replacing $Bo_3$
of $Bo$ by its $(1,n)$-cable.

If $A\in{GL(2,\mathbb{Z})}$ then $MT(A)$ is the mapping torus
of the self-homeomorphism of the torus $T$ induced by $A$.

\section{characteristic not 2}

Let $M$ be a closed 3-manifold and let $\pi=\pi_1(M)$.
Let $R=\mathbb{Z}$,  a prime field $\mathbb{Q}$ or $\mathbb{F}_p$, with $p$ odd.
Let $H=\pi^{ab}$ and let $H^*=Hom(H,R)$.
If $M$ is orientable then $H_3(M;R)\cong{R}$, 
and cap product with a fundamental class $[M]$ 
defines Poincar\'e duality isomorphisms $D_2:H^2(M;R)\to{R\otimes{H}}$
and $D_3:H^3(M;R)\to{R}$,
while cup product and duality define homomorphisms
$\gamma:\wedge_2H^*\to{H^2(M;R)}$ and 
$\mu=\mu_M:\wedge_3H^*\to{R}$.
(Alternating trilinear functions such as $\mu$ may be identified with 
elements of $\wedge_3H$.)
These satisfy the equations
\[
D_3(a\smile{D_2^{-1}(h)})=a(h)\quad\forall~a\in{H^*}~and~h\in{H}
\]
and
\[
\mu(a\wedge{b}\wedge{c})=a(D_2\gamma(b\wedge{c}))\quad
\forall~a,b,c\in{H^*}.
\]
If $R$ is a prime field, (with characteristic $\not=2$) 
or if $R=\mathbb{Z}$ and $H$ is torsion-free then the cohomology ring $H^*(M;R)$ 
is determined by $H$, $\mu$ and the duality isomorphisms $D_2$ and $D_3$,
via these equations.
(More explicitly, let $\mathcal{H}^0=R$,
$\mathcal{H}^1=H^*$, $\mathcal{H}^2=H$ and $\mathcal{H}^3=R\varepsilon_3$,
and let $rs$ be the unique solution of 
``$t(rs)=\mu(r\wedge{s}\wedge{t})\varepsilon_3~\forall~t\in{R^1}$",
and $rh=r(h)\varepsilon_3$,
for $r,s\in\mathcal{H}^1$ and $h\in\mathcal{H}^2$.
Then $rst=\mu(r\wedge{s}\wedge{t})\varepsilon_3$,  for all $r,s,t\in{H^*}$,
and $\mathcal{H}^*(H,\mu)=\oplus_{i=0}^3\mathcal{H}^i$ is a graded ring.
We may use $D_2$ and $D_3$ to determine an isomorphism 
$H^*(M;R)\cong\mathcal{H}^*(H,\mu)$.)
If $H^*\not=0$ then $D_2$ determines $D_3$, by the first equation above.

\begin{thm} [Sullivan \cite{Su75}]
If $R=\mathbb{Z}$ then every such pair $(H,\mu)$ is realizable by some closed 
orientable $3$-manifold with torsion-free homology.
\qed
\end{thm}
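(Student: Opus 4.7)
The plan is to realize each pair $(H,\mu)$ by $0$-framed surgery on a suitable link in $S^3$. Fix a basis $\{e_1,\dots,e_n\}$ of $H\cong\mathbb{Z}^n$ and write $\mu=\sum_{i<j<k}m_{ijk}\,e_i\wedge e_j\wedge e_k\in\wedge_3H$, so that $\mu(e_i^*\wedge e_j^*\wedge e_k^*)=m_{ijk}$.

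First I would record the elementary models. The $0$-framed unknot gives $S^1\times S^2$, realizing $(\mathbb{Z},0)$; $0$-surgery on $n$ split unknots realizes $(\mathbb{Z}^n,0)$. The $0$-framed Borromean rings give $M(Bo)\cong T^3$ with nontrivial triple cup product $e_1^*\cup e_2^*\cup e_3^*$ generating $H^3(T^3;\mathbb{Z})$, and more generally $M(Bo(m))$ realizes $(\mathbb{Z}^3,m\,e_1\wedge e_2\wedge e_3)$, since replacing $Bo_3$ by its $(1,m)$-cable multiplies the triple Milnor invariant by $m$ while keeping all pairwise linking numbers equal to zero.

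Next, for general $(H,\mu)$ I would superpose these elementary models inside a single link $L\subset S^3$. Start with $n$ unknotted, unlinked components $K_1,\dots,K_n$, one per basis element; for each triple $i<j<k$ with $m_{ijk}\neq 0$, insert in a disjoint ball a ``Borromean clasp'' of three short arcs taken respectively off $K_i$, $K_j$, $K_k$ in the pattern of $Bo(m_{ijk})$. Because the modifications are disjointly supported, they are additive on triple Milnor invariants, do not change pairwise linking numbers, and preserve the unknotted isotopy type of each individual component. The $0$-surgery $M=M(L)$ is then a closed orientable $3$-manifold with $H_1(M;\mathbb{Z})\cong\mathbb{Z}^n$ free on the meridians, and the Kronecker dual basis of $H^1(M;\mathbb{Z})$ is identified with $\{e_1^*,\dots,e_n^*\}$.

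The remaining step, which I expect to be the main obstacle, is to identify the triple cup product $\mu_M$ with $\mu$. Because all linking numbers vanish, the triple Massey products in $H^*(S^3\setminus L;\mathbb{Z})$ on the meridional generators coincide with Milnor's $\bar\mu$-invariants of $L$, and these Massey products descend to ordinary cup products in $H^*(M;\mathbb{Z})$ because in $M$ each meridian bounds the core of the attached $2$-handle. The explicit calculation for $M(Bo(m))$ handles each local clasp, and additivity over disjoint balls yields $\mu_M=\sum m_{ijk}\,e_i\wedge e_j\wedge e_k=\mu$. The delicate point is to verify that the Massey product indeterminacy vanishes in this setting, so that the $\bar\mu$-invariants truly represent the cup products on the nose rather than merely up to a coset of lower products.
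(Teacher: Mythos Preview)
The paper does not give its own proof of this statement: it is quoted as Sullivan's theorem and marked with a \qed. Sullivan's original argument (cited in the paper) uses Heegaard decompositions, not link surgery. So there is no paper-proof to match against, but your approach can be compared with what the paper does develop in \S4.

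Your link-surgery strategy is exactly the idea the paper pursues in \S4. The paper's Lemma~1 is your computation for $M(Bo(m))$, and Theorem~2 is your superposition of Borromean clasps---but note that the paper verifies the triple products by direct geometric intersection of capped Seifert surfaces, not via the Massey/Milnor route you propose. More importantly, the paper's Theorem~2 only realizes forms with coefficients $0$ or $1$, and the paper explicitly flags the gap you are trying to close: ``However another idea seems necessary to pick up coefficients other than $0$ or $1$.'' So your proposal goes beyond what the paper actually establishes.

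There is one genuine imprecision in your plan. The link $Bo(m)$ is obtained by $(1,m)$-cabling an \emph{entire} component of $Bo$; this is a global operation, not a local tangle replacement in a ball, so ``insert a clasp in the pattern of $Bo(m_{ijk})$'' is not well-defined as stated, and it is unclear how the same component $K_k$ could be cabled differently for different triples containing $k$. The clean fix is to insert $|m_{ijk}|$ copies of the ordinary Borromean clasp (the paper's move $(b)$) in disjoint balls, with orientation chosen by the sign of $m_{ijk}$; these contribute $\pm1$ additively to $\bar\mu_{ijk}$ and leave all other $\bar\mu$'s and all linking numbers unchanged. With that correction your Massey/Milnor argument goes through: since all pairwise linking numbers vanish, the indeterminacy of each triple Massey product is zero, and the identification $\bar\mu_{ijk}(L)=(e_i^*\cup e_j^*\cup e_k^*)\cap[M(L)]$ is then exact. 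Alternatively, you can bypass Massey products entirely and compute the cup products geometrically, exactly as the paper does in the proof of Lemma~1.
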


We may assume that the 3-manifold is irreducible \cite{Li81}.

If $M$ is  not orientable and $F$ is a field of characteristic $\not=2$ 
hen $H^3(M;F)=0$, and so $\beta_2(M;F)=\beta_1(M;F)-1$.
Up to automorphisms of $H^1(M;F)$ and $H^2(M;F)$, 
the only issue of interest is the rank of $c_p$.
If $P_c=\#^r\mathbb{RP}^2$ and $M=S^1\times{P_c}$ then $c_p$
is an epimorphism for all characteristics $p\not=2$.
Hence connected sums of $S^1\times{P_c}$
with copies of $S^1\times{S^2}$ realize all possible combinations of rank 
and Betti numbers satisfying $rk(c_p)\leq\beta_2(M;\mathbb{F})-1$.

When $R=\mathbb{Z}$ or $\mathbb{F}_2$ the Poincar\'e duals of Kronecker duals of a basis for $H_1(M;R)$ represented by simple closed curves in $M$ may be represented 
by closed surfaces in $M$ which meet one such curve transversely in one point and
are disjoint from the other curves. However these closed surfaces are generally not pairwise disjoint, and we may use their intersections to identify cup products.
See \cite{Hu11}.

\section{integer coefficients: torsion}

When $R=\mathbb{Z}$ and $H_1(M;\mathbb{Z})$ has nontrivial torsion 
then $\gamma$ is no longer determined by $\mu_M$ and the above equation.
The torsion subgroup has a complementary direct summand,
but the splitting is not canonical.
Cup products with coefficients in other rings and their compatibilities 
with integral cup product must also be considered.
V.G.Turaev gave a definitive account in \cite{Tu83}.

For each $n>1$ let $\nu_n=\nu_{nM}:\wedge_3H^1(M;\mathbb{Z}/n\mathbb{Z})
\to\mathbb{Z}/n\mathbb{Z}$ be defined by
\[
\nu_n(X,Y,Z)=(X\cup{Y}\cup{Z})\cap[M], \quad\mathrm{for~all}~X,Y,Z\in{H^1(M;\mathbb{Z}/n\mathbb{Z})}.
\]
Then $\nu_{nM}$ and Poincar\'e duality together determine the ring
$H^*(M;\mathbb{Z}/n\mathbb{Z})$.
Every 3-form $\nu:\wedge_3(\mathbb{Z}/n\mathbb{Z})^\beta
\to\mathbb{Z}/n\mathbb{Z}$
lifts to a 3-form $\widehat{\nu}:\wedge_3\mathbb{Z}^\beta\to\mathbb{Z}$.
Hence it is an immediate consequence of Sullivan's construction that 
every such 3-form $\nu$ can be realized as $\nu_{nM}$ for some closed 
orientable 3-manifold $M$ with $H_1(M;\mathbb{Z})\cong\mathbb{Z}^\beta$.
If $p$ is an odd prime it follows that every finite graded-commutative graded 
$\mathbb{F}_p$-algebra satisfying
3-dimensional Poincar\'e duality is the $\mathbb{F}_p$ cohomology ring
of such a 3-manifold.

The Bockstein homomorphism
$\beta_{\mathbb{Q}/\mathbb{Z}}:Hom(H,\mathbb{Q}/\mathbb{Z})\to{H^2(M;\mathbb{Z})}$
has image $Ext(H,\mathbb{Z})$,  the torsion subgroup of $H^2(M;\mathbb{Z})$. 
We may use this to define the torsion linking  pairing 
$\ell:tH\times{tH}\to\mathbb{Q}/\mathbb{Z}$ 
by 
\[
\ell(u,v)=(D_2\circ\beta_{\mathbb{Q}/\mathbb{Z}})^{-1}(v)(u)\quad\forall~u,v\in{tH}.
\]
This pairing is nonsingular and symmetric,
and $\ell$ and $\beta_{\mathbb{Q}/\mathbb{Z}}$ determine each other
(given $D_2$).
Every such pairing is realizable by some  
$\mathbb{Q}$-homology 3-sphere \cite[Theorem 6.1]{KK80}.
Taking connected sums shows that every such triple $(H, \mu, \ell)$ 
is realizable by some 3-manifold. 
On the other hand,  $\mu$ and $\ell$ are independent invariants.

It is easily verified that if $x\in{H^1(M;\mathbb{Z}/n\mathbb{Z})}$
then $x^2=\frac{n}2\beta_{\mathbb{Q}/\mathbb{Z}}(x)$.
(It suffices to check this for $x=id_{\mathbb{Z}/n\mathbb{Z}}$,
considered as an element of $H^1(\mathbb{Z}/n\mathbb{Z};\mathbb{Q}/\mathbb{Z})=
H^1(\mathbb{Z}/n\mathbb{Z};\mathbb{Z}/n\mathbb{Z})$.)
Let $\psi_n:\mathbb{Z}/n\mathbb{Z}\to\mathbb{Q}/\mathbb{Z}$ be the standard inclusion.
If $x\in{H^{*n}}$ let $\hat{x}$ be the element of $tH$ such that 
$\ell(\hat{x},a)=\psi_n(x(a))$ for all $a\in{tH}$.
Turaev showed that
\[
\psi_n(\nu_n(x,x,y))=\frac{n}2\ell(\hat{x},\hat{y})~\mathrm{for~all}
~x,y\in{H^{*n}}.
\]
When $n=2$ this condition implies the orientable case of the 
Postnikov-Wu identity invoked below.
(Note that if $n$ is odd then both sides are 0.)

Let $A$ be a finitely generated abelian group, 
let $\ell:tA\times{tA}\to\mathbb{Q}/\mathbb{Z}$ be a nonsingular symmetric pairing,
and suppose that $\nu:\wedge_3A^*\to\mathbb{Z}$
and $\nu_n:\wedge_3A^{*n}\to\mathbb{Z}/n\mathbb{Z}$
is a system of alternating trilinear functions
which are compatible under reduction {\it mod}-$n$.
Then Turaev showed that such a group $A$, pairing $\ell$ and system 
of trilinear functions may be realizable by the homology and cohomology of a closed orientable 3-manifold if and only if the above condition deriving from the interaction of 
the Bockstein $\beta_{\mathbb{Q}/\mathbb{Z}}$ with the cup-square holds 
for all $n$ dividing the order of $tA$ \cite{Tu83}.

\section{surgery on links}

In the next six sections we shall give an alternate approach to Postnikov's result
on the $\mathbb{F}_2$-cohomology rings of 3-manifolds.
Our examples shall be constructed by surgery on framed links.
Related constructions were used by Turaev and Postnikov, 
although in the latter case ``surgery" was not yet a mathematical term.
(Sullivan uses instead Heegaard decompositions.)

In the orientable case, 
in order to construct a closed 3-manifold with $M$ with $\beta_1(M;\mathbb{F}_2)=\rho$ we modify various of the components of a trivial $\rho$-component link $\rho{U}$ 
in $S^3$, using local moves which involve replacing trivial 2- or 3-component
tangles in a ball by other tangles from a limited repertoire.

The components of the tangles in Figure 1 represent distinct link components.
Move $(a)$ changes the linking number by $\pm2$, and
an application of this move changes the 2-component trivial link $2U$ 
into $L_{2,4}$.
Move $(b)$ does not changing the linking number.
An application of this move changes the 3-component trivial link $3U$ into $Bo$.
Moves $(a)$ and $(b)$ (in either order) change $3U$ into the link of Figure 3.
These moves do not change the knot types of the link components.

\setlength{\unitlength}{1mm}
\begin{picture}(90,58)(5,-5)
{
\linethickness{1pt}
\put(10,12){\line(0,1){34}}
}

\put(16,12){\line(0,1){34}}

\put(25,25){$\to$}

{
\linethickness{1pt}
\put(40,12){\line(0,1){8}}
\put(40,22){\line(0,1){10}}
\put(40,34){\line(0,1){12}}
}

\put(46,12){\line(0,1){6}}
\qbezier(46,18)(46,21)(43,21)
\put(39,21){\line(1,0){4}}
\qbezier(39,21)(36,21)(36,24)
\qbezier(36,24)(36,27)(39,27)
\qbezier(41,27)(44,27)(44,30)
\qbezier(44,30)(44,33)(41,33)
\qbezier(39,33)(36,33)(36,36)
\put(39,33){\line(1,0){2}}
\qbezier(36,36)(36,39)(39,39)
\put(41,39){\line(1,0){2}}
\qbezier(43,39)(46,39)(46,42)
\put(46,42){\line(0,1){4}}

\put(25,5){$(a)$}

{\linethickness{1pt}
\qbezier(66,40)(71,30)(66,20)
\qbezier(84,40)(79,30)(84,20)}

\qbezier(70,40)(75,35)(80,40)

\put(92,30){$\to$}

{\linethickness{1pt}
\qbezier(102,40)(105,37)(105,36)
\qbezier(102,20)(105,23)(106,28)
\qbezier(106,28)(106.5,30)(106,32)

\qbezier(123,40)(120,37)(119.5,35.2)
\qbezier(119,25.5)(118.5,30)(119,33.2)
\qbezier(119.5,23.5)(120,21.5)(121,20)
}

\qbezier(100,30)(100,33)(105,35)
\qbezier(100,30)(100,25)(103,24)
\qbezier(102,30)(102,32)(105,33)
\qbezier(102,30)(102,27)(104,26)

\qbezier(105,33)(111,35)(118,32.7)
\qbezier(105,35)(109,36)(110,40)
\qbezier(119,34.5)(115,35.5)(114,40)

\qbezier(106,25.5)(111,24)(118,26)
\qbezier(105,23.4)(112,22)(119,24.3)

\qbezier(119,34.5)(120,34)(121,33.5)
\qbezier(119.7,32)(120,32)(120.2, 31.8)
\qbezier(121,33.5)(122,32)(120,31.8)

\qbezier(119,24.3)(120,24.5)(121,25.5)
\qbezier(119.8,26.5)(120.1,26.8)(120.5,26.7)
\qbezier(120.5,26.7)(121.5,26.1 )(121,25.5)

\put(92,5){$(b)$}

\put(40,0){ Figure 1. Two tangle moves.}

\end{picture}

In each of these cases a given set of spanning surfaces for the components 
of the original link may be modified within the ball containing the tangle 
without introducing new intersections outside the ball.
(For instance, applying move $(b)$ to the trivial 3-component link $3U$ gives $Bo$, 
as in Figure 2.)

In the nonorientable case we start with nontrivial links in $S^2\tilde\times{S^1}$
which are disjoint unions of several basic 2- or 3-component links 
with all components orientation preserving.
(Each component $K$ then has ``longitudes" which bound surfaces in 
$S^2\tilde\times{S^1}$. )
We may assume that each component meets a fixed copy of the fibre $S^2$
transversely, and shall make our modifications to trivial 2- or 3-component tangles in 
balls lying in the complementary region $S^2\times(0,1)$.

\begin{lemma}
Let $M=M(Bo(n))$, and let $a,b,c$ be the basis for
$H_1(M;\mathbb{Z})$ determined by the meridians of $Bo(n)$.
Let $a^*,b^*,c^*$ be the Kronecker dual basis for $H^1(M;\mathbb{Z})$. 
Then $(a^*\cup{b^*}\cup{c^*})\cap[M]=n$.
\end{lemma}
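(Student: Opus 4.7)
The plan is to compute $(a^*\cup b^*\cup c^*)\cap[M]$ as the signed triple intersection number of three closed surfaces in $M$ representing the Poincar\'e duals of $a,b,c$, obtained by capping Seifert surfaces for $Bo_1,Bo_2,C$ with the meridian disks of the surgery solid tori. First, all pairwise linking numbers in $Bo(n)$ vanish: the cable $C$ inherits its linking numbers with $Bo_1,Bo_2$ from $Bo_3$ (both zero), and $C$ has Seifert self-linking zero. So every Seifert longitude is null-homologous in the link exterior, whence $H_1(M;\mathbb{Z})\cong\mathbb{Z}^3$ with basis $a,b,c$, the capped surfaces $\hat F_a,\hat F_b,\hat F_c$ realize the Kronecker duals, and any triple intersection point must lie in the exterior of $N(Bo(n))$ (the three surgery cap disks sit in disjoint solid tori, so no triple point can occur there).

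Next I would choose $F_a,F_b$ to lie in $W:=S^3\setminus\mathrm{int}(V)$, where $V=N(Bo_3)$; this is possible because $\mathrm{lk}(Bo_i,Bo_3)=0$ for $i=1,2$. Their intersection $\gamma=F_a\cap F_b$ is then a $1$-manifold in $\mathrm{int}(W)$. Let $F_3\subset W$ denote an analogous Seifert surface for $Bo_3$ in $S^3\setminus(Bo_1\cup Bo_2)$. The classical identification $M(Bo)\cong T^3$, whose triple cup product equals $\pm 1$, yields the Borromean triple-linking identity $\gamma\cdot F_3=1$ after fixing orientations compatibly.

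Finally, build $F_c$ for the $(1,n)$-cable $C=\mu+n\lambda\subset\partial V$ of $Bo_3$ by taking the union of one meridian disk $D_\mu\subset V$ (with $\partial D_\mu=\mu$) and $n$ parallel copies $F_3^{(1)},\dots,F_3^{(n)}$ of $F_3$ (with boundaries $n$ parallel copies of $\lambda$), then merging the $n+1$ boundary circles into the single curve $C$ by $n$ oriented saddle moves on $\partial V$, one at each point where $\mu$ meets a parallel longitude. Since $\gamma\subset\mathrm{int}(W)$, it is disjoint from $D_\mu\subset V$ and from the saddle pieces on $\partial V$, so only the copies of $F_3$ contribute, and
\[
\gamma\cdot F_c=\sum_{i=1}^{n}\gamma\cdot F_3^{(i)}=n\cdot 1=n,
\]
giving $(a^*\cup b^*\cup c^*)\cap[M]=n$. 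The main obstacle will be the combinatorics and orientation bookkeeping of the saddle construction: verifying that the result really has a single connected boundary equal to $C$, that $F_c$ remains disjoint from $Bo_1\cup Bo_2$, and that each parallel copy of $F_3$ contributes the same sign so the total is $+n$ rather than exhibiting cancellation.
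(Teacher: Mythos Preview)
Your argument is correct and shares the paper's overall strategy: represent the Poincar\'e duals of $a^*,b^*$ by surfaces obtained from spanning surfaces for $Bo_1,Bo_2$ pushed into the complement of a tubular neighbourhood $V$ of $Bo_3$, and let $\gamma$ be their intersection curve. The paper, however, short-circuits your final step. Rather than building a Seifert surface $F_c$ for the cable $C$ by a saddle construction and computing the geometric intersection $\gamma\cdot F_c$, it simply observes that with the explicit tubed-disk construction (Figure~2) one has $\gamma$ equal to a meridian $\mu_3$ of $Bo_3$, and then notes the homological identity $[\mu_3]=n\cdot c$ in $H_1(M)$ (since a meridian disk of $V$ meets the $(1,n)$-cable $C$ in $n$ points). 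Evaluating $c^*$ on $[\gamma]=nc$ gives $n$ directly.

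So the paper's route avoids constructing any spanning surface for the cable at all, and in particular sidesteps the orientation and connectivity bookkeeping of your saddle construction. Your approach trades that simplicity for generality: you never need to pin down $\gamma$ as a specific curve, only to know that $\gamma\cdot F_3=1$ from the $n=1$ case, and then the factor $n$ comes from the $n$ parallel copies of $F_3$ inside $F_c$. Both arguments extract the factor $n$ from the same underlying fact (the cable winds $n$ times longitudinally in $V$), just read off on opposite sides of the intersection pairing.
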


\begin{proof}
This is most easily seen using a mixture of algebra and geometry.
The first two components bound disjoint discs in their mutual complement.
Each of these discs meets the third component in 2 points.
If we delete neighbourhoods of these intersection points and attach tubes 
which surround arcs of the third component, 
we obtain punctured tori in the exterior of $Bo(n)$,  
which may closed off in $M$ by copies of the surgery discs used in forming $M$.
The resulting tori represent the Poincar\'e duals of $a^*$ and $b^*$,
and intersect along a meridian for the third component of $Bo$,
as in Figure 2.
The latter meridian is homologous in $M$ to $n.c$,
since the third component of $Bo(n)$ is the $(1,n)$-cable of the third component of $Bo$.
Hence $(a^*\cup{b^*}\cup{c^*})\cap[M]=n$.
\end{proof}

If $H$ has basis $\{e_1,\dots,e_\beta\}$ 
then the simple 3-forms $e_{ijk}=e_i\wedge{e_j}\wedge{e_k}$
with $i<j<k$ form a basis for $\wedge_3H$.

\begin{theorem}
Let $M=M(L)$, where $L$ is an $m$-component link with ordered 
and oriented components and such that each $3$-component sublink 
is either trivial or is a copy of $Bo$.
Let $I$ be the set of ordered triples $i<j<k$ such that $L_i\cup{L_j}\cup{L_k}$
is isotopic to $Bo$, and let $\{a_i\}$ be the meridianal basis of $H_1(M;\mathbb{Z})$. 
Then 
\[
\mu_M=\Sigma_{(i,j,k)\in{I}}a_i\wedge{a_j}\wedge{a_k}.
\]
\end{theorem}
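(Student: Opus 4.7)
The plan is to evaluate $\mu_M$ on the basis of $\wedge_3 H^1(M;\mathbb{Z})$ Kronecker-dual to the meridianal basis and to show, triple by triple, that the answer is $1$ exactly when the corresponding 3-component sublink is Borromean. Because every 3-sublink of $L$ is trivial or a copy of $Bo$, all pairwise linking numbers in $L$ vanish, so the linking matrix is zero and $H_1(M;\mathbb{Z})\cong\mathbb{Z}^m$ with basis $\{a_1,\dots,a_m\}$. Writing $\{a_i^*\}$ for the Kronecker dual basis of $H^1(M;\mathbb{Z})$, it suffices by the formulas in \S2 to compute
\[
\mu_M(a_i^*\wedge a_j^*\wedge a_k^*)=(a_i^*\cup a_j^*\cup a_k^*)\cap[M]
\]
for $i<j<k$, and to show this integer equals $1$ when $L_i\cup L_j\cup L_k$ is isotopic to $Bo$ and $0$ otherwise.

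I would represent each $a_p^*$ geometrically by a closed oriented surface $\Sigma_p\subset M$ constructed as follows. Start with a Seifert surface $F_p$ for $L_p$ in $S^3$. Since $\mathrm{lk}(L_p,L_q)=0$ for every $q\neq p$, $F_p$ meets each other component in an algebraically cancelling set of points, and we may tube each such pair off through the $0$-framed surgery solid torus at $L_q$ to obtain a surface in $M\setminus(L\setminus L_p)$ whose boundary is $L_p$; capping with the surgery disc for $L_p$ produces a closed surface $\Sigma_p$ with algebraic intersection $\delta_{pq}$ against $a_q$, so $[\Sigma_p]=D_2(a_p^*)$. Fixing a triple $i<j<k$, the triple cup product is then the signed count of transverse points of $\Sigma_i\cap\Sigma_j\cap\Sigma_k$.

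The key geometric claim is that this count depends only on the isotopy type of the sublink $L_i\cup L_j\cup L_k$. When that sublink is trivial, the three Seifert surfaces can be chosen pairwise disjoint in $S^3$, so a general position perturbation makes $\Sigma_i\cap\Sigma_j\cap\Sigma_k=\varnothing$ and the contribution is $0$. When it is isotopic to $Bo$, the construction in the proof of the preceding lemma, specialized to $n=1$, produces exactly one transverse triple point, contributing $1$; summed over $(i,j,k)\in I$ this yields the claimed formula for $\mu_M$. The main obstacle is to justify this localization in the presence of the other components: the tubes used to build $\Sigma_i$ and $\Sigma_j$ may traverse surgery handles of some $L_\ell$ with $\ell\notin\{i,j,k\}$, so one must argue that $\Sigma_i\cap\Sigma_j$ can be chosen to coincide, in a neighbourhood of $L_i\cup L_j\cup L_k$, with the intersection produced in the three-component model, while the extra arcs of $\Sigma_i\cap\Sigma_j$ lying on $\partial N(L_\ell)$ appear in cancelling pairs and can be pushed off $\Sigma_k$ without changing the triple count.
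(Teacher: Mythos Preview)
Your proposal is correct and follows essentially the same strategy as the paper: evaluate $\mu_M$ on the Kronecker-dual basis by representing each $a_p^*$ by a closed surface built from a spanning surface for $L_p$, and invoke Lemma~1 (with $n=1$) for the Borromean sublinks. The paper's own proof is two sentences, simply referring back to Lemma~1.

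The one point worth flagging is how localization is handled. The paper does not tube off arbitrary Seifert surfaces; rather, it relies on the remark just before Lemma~1 that $L$ is obtained from the trivial $m$-component link by applying move~$(b)$ of Figure~1 in disjoint balls, and that the disjoint spanning discs for the trivial link can be modified \emph{only inside those balls}. The resulting surfaces are therefore pairwise disjoint outside the Borromean tangles, and the triple intersections are confined there by construction. Your tubing construction is more general but forces you to confront the issue you raise at the end; your resolution is fine (the extra arcs of $\Sigma_i\cap\Sigma_j$ near $L_\ell$ contribute only multiples of $a_\ell$ to the homology class, which pair trivially with $a_k^*$), but the paper's route sidesteps it entirely.
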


\begin{proof}
Let $e_1,\dots,e_m$ be the basis for $H_1(M;\mathbb{Z})$ determined by the meridians for $L$, and let $e_1^*,\dots,e_m^*$ be the Kronecker dual
basis for $H^1(M;\mathbb{Z})$.
To calculate $\mu(e_i^*,e_j^*,e_k^*)$ it suffices to use Poincar\'e duals for
the $e_i^*$ as in Lemma 1.
\end{proof}

Figure 2 shows two punctured tori spanning components $Bo_1$ and $Bo_2$ of $Bo$, 
with intersection a meridian $c$ for the third component.

\setlength{\unitlength}{1mm}
\begin{picture}(95,85)(-30,-18)

\put(7,54){$L_1$}
\put(-13,20){$L_2$}
\put(33,35.5){$c$}

\linethickness{1pt}
\qbezier(10,40)(10,60)(30,60)
\put(10,28){\line(0,1){12}}
\qbezier(10,28)(10,18),(20,18) 

\put(32,18){\line(1,0){24}}

\put(50,6){\line(1,0){5}}
\put(50,9){\line(1,0){3}}
\qbezier(55,6)(61,6)(61,12)
\qbezier(53,9)(58,9)(58,14)
\put(58,14){\line(0,1){10}}
\put(61,12){\line(0,1){12}}
\qbezier(58,24)(58,25.5)(59.5,25.5)
\qbezier(59.5,25.5)(61,25.5)(61,24)

\put(63,18){\line(1,0){2}}
\qbezier(65,18)(75,18)(75,28)
\put(75,28){\line(0,1){12}}
\qbezier(75,40)(75,60)(55,60)
\put(30,60){\line(1,0){25}}

\qbezier(34,34)(37.5,34)(37.5, 30.5)
\qbezier(34,27)(37.5,27)(37.5,30.5)

\qbezier(34,34)(32,34)(31.8,32.8)
\qbezier(31,31.5)(30.6,30.4)(31,29.3)
\qbezier(34,27)(32,27)(31.7,28)

\qbezier(22,22)(22,34)(34,34)
\put(22,17){\line(0,1){5}}
\qbezier(22,17)(22,4)(35,4)

\qbezier(30,22)(30,27)(35,27)
\put(30,16){\line(0,1){6}}
\qbezier(30,16)(30,11)(35,11)

\qbezier(35,11)(38.5,11)(38.5, 7.5)
\qbezier(35,4)(38.5,4)(38.5,7.5)

\qbezier(-7,30)(-7,45)(8,45)

\qbezier(-7,10)(-7,-5)(8,-5)
\put(-7,10){\line(0,1){20}}
\put(8,-5){\line(1,0){25}}

\put(48, 10){\line(0,1){6}}
\qbezier(48,10)(48,-5)(33,-5)

\thinlines

\put(12,45){\line(1,0){21}}

\qbezier(42,34)(45.5,34)(45.5, 30.5)
\qbezier(42,27)(45.5,27)(45.5,30.5)
\put(37,34){\line(1,0){1.5}}
\put(37,27){\line(1,0){1.5}}
\put(39.5,34){\line(1,0){1.5}}
\put(39.5,27){\line(1,0){1.5}}

\qbezier(33,45)(48,45)(48,30)
\put(48,20){\line(0,1){10}}

\qbezier(25,14)(25,6)(33,6)
\qbezier(28,14)(28,9)(33,9)
\put(33,6){\line(1,0){4}}
\put(33,9){\line(1,0){4}}

\put(25,14){\line(0,1){9}}
\put(28,14){\line(0,1){9}}
\qbezier(25,23)(25,32)(34,32)
\qbezier(28,23)(28,29)(34,29)
\qbezier(34,32)(35.5,32)(35.5,30.5)
\qbezier(34,29)(35.5,29)(35.5,30.5)

\put(39.5,6){\line(1,0){1.5}}
\put(42,6){\line(1,0){1.5}}
\put(44.5,6){\line(1,0){1.5}}
\put(39.5,9){\line(1,0){1.5}}
\put(42,9){\line(1,0){1.5}}
\put(44.5,9){\line(1,0){1.5}}

\put(25,-13){Figure 2.}
\end{picture}

Lemma 1 and Theorem 2 suffice to recover Sullivan's result 
for $H\cong\mathbb{Z}^\beta$ with $\beta\leq5$.
If $\beta<3$ then $\mu=0$ and  if $\beta=3$ then 
$\mu=re_1^*\wedge{e_2^*}\wedge{e_3^*}$, for some $r\in\mathbb{Z}$.
If $\beta=4$ then 3-forms are dual to 1-forms, 
while if $\beta=5$ then 3-forms are dual to alternating 2-forms,
and so $\mu$ is equivalent (under the action of
$GL(\beta,\mathbb{Z})$) to $re_1^*\wedge{e_2^*}\wedge{e_3^*}$ or
$re_1^*\wedge{e_2^*}\wedge{e_3^*}+se_1^*\wedge{e_4^*}\wedge{e_5^*}$
(respectively),
for some $r,s\in\mathbb{Z}$.

If $H^{*p}$ has basis $\{e_1,\dots,e_\gamma\}$ 
then the simple 3-forms $e_{ijk}=e_i\wedge{e_j}\wedge{e_k}$
with $i<j<k$ form a basis for $\wedge_3H^{*p}$.
If $\nu=\Sigma\eta_{ijk}{e_{ijk}}$, where each  $\eta_{ijk}=0$ or 1
then we may realize $\nu$ by a closed orientable 3-manifold with
$H_1(M;\mathbb{Z})\cong\mathbb{F}_p^\gamma$.

A similar argument applies for $M(Bo(n);2)$ and coefficients $\mathbb{F}_2$
(and probably also to $M(Bo(n);p)$ and coefficients $\mathbb{F}_p$),
but we may need to extend intersection theory to 
$\mathbb{Z}/p\mathbb{Z}$-manifolds  \cite{Su67}, 
or alternatively use a more algebraic argument, working modulo
the cores $S^1\times\{0\}$ of the surgeries.

This construction could then be extended to the case 
$H\cong\mathbb{Z}^\beta\oplus\mathbb{F}_p^\rho$,
by using 0-framed surgeries on the first $\beta$ components.
However another idea seems necessary to pick up coefficients other than 0 or 1.

If $F$ is a field and $\beta\leq7$ then every 3-form on $F^\beta$ is equivalent 
under the action of  $GL(\beta,F)$ to a standard 3-form of the above type.
This is clear if $\beta\leq3$, and also if $\beta=4$, 
for then 3-forms are dual to 1-forms.
If $\beta=5$ then 3-forms are dual to 2-forms, 
and so correspond to skew-symmetric pairings.
Hence there are just two equivalence classes of nonzero forms,
represented by $e_{123}$ and $e_{123}+e_{145}$.
The case $\beta=6$ involves more work, but there are just two more standard forms,
represented by $e_{123}+e_{456}$ and $e_{162}+e_{243}+e_{135}$.
If $\beta=7$ there are at most 12 equivalence classes, 
some involving coefficients other than 0 or 1,
but the above construction still suffices. 
See \cite{CH88} for details of the standard forms in this case.
On the other hand, if $\beta>8$ then 
$\dim_FGL(\beta,F)=\beta^2<\dim_F\wedge_3F^\beta=\binom\beta3$,
and the number of equivalence classes is unbounded as the order of $F$ increases.
The result of Turaev shows that we should not need to appeal to \cite{CH88}.

\section{characteristic 2}

In the strictly antisymmetric cases (characteristic 0 or odd), 
there is only one possible nonzero cup product involving three given degree-1 classes.
When  $R=\mathbb{F}_2$  the homomorphisms $\gamma$ and $\mu$ must be replaced
by homomorphisms from the symmetric  products 
$\odot^2H^*\to{H^2(M;\mathbb{F}_2)}$ and $\odot^3H^*\to{H^3(M;\mathbb{F}_2)}$.
There are many more possibilities for nonzero triple products,
and we should now consider also nonorientable 3-manifolds.
Postnikov gave a complete account of the $\mathbb{F}_2$-intersection rings 
of 3-manifolds.
He did not assume orientability, 
and his description of $H^*(M;\mathbb{F}_2)$ includes also the relation 
$uvw=u^2v+uv^2$, for all $u,v\in{H^1(M;\mathbb{F}_2)}$ \cite{Po48}.
(Here $w$ is the orientation character.)
This may now be seen as an application  of the Wu relation $Sq^1z=wz$,  
for all $z\in{H^{n-1}(M^n;\mathbb{F}_2)}$, which first appeared later \cite{Wu51}.

We have adapted Postnikov's terminology to the cohomological formulation.
An {\it $MS$-algebra\/} is a finite commutative graded $\mathbb{F}_2$-algebra 
$\mathcal{A}^*=\oplus_{i=0}^3\mathcal{A}^i$ such that 
$\dim\mathcal{A}^0=\dim\mathcal{A}^3=1$,
$\mathcal{A}^j=0$ for $j>3$, 
multiplication defines a perfect pairing from 
$\mathcal{A}^1\times\mathcal{A}^2$ into $\mathcal{A}^3$, 
so that $\mathcal{A}^*$ satisfies formal Poincar\'e duality of dimension 3,
and which has a distinguished element $w\in\mathcal{A}^1$  such that
the {\it Postnikov-Wu identity}
 \[
wxy=x^2y+xy^2
\]
holds for all $x,y\in\mathcal{A}^1$.
An {\it $MS$-algebra isomorphism\/} is a ring isomorphism under which the 
distinguished elements $w$ correspond,
and $\mathcal{A}^*$ is {\it orientable\/} if $w=0$.
We shall abbreviate ``by the nonsingularity of multiplication from 
$\mathcal{A}^1\times\mathcal{A}^2$ to $\mathcal{A}^3$" to ``by nonsingularity".

In the orientable case the Postnikov-Wu identity is an easy consequence 
of standard facts about the reduced Bockstein homomorphisms $\overline\beta_2$.
If $X$ is an orientable $PD_3$-complex then reduction mod $(2)$ maps
$H^2(X;\mathbb{Z})$ onto $H^2(X;\mathbb{F}_2)$, 
since $H^3(X;\mathbb{Z})\cong\mathbb{Z}$.
Hence $\overline\beta_2$ is trivial on  $H^2(X;\mathbb{F}_2)$, 
and so $u^2v+uv^2=Sq^1(u)v+uSq^1(v)=\overline\beta_2(uv)=0$
for all $u,v\in{H^1(X;\mathbb{F}_2)}$.

\section{the orientable case}

In this section we shall consider the orientable case,  which is somewhat easier, 
as we may realize a basis for $\mathcal{A}^1$ 
by meridians for a suitable link in $S^3$.
The Postnikov-Wu identity also simplifies to  $x^2y=xy^2$  
for all $x,y\in\mathcal{A}^1$.
We shall design a framed link representing the 3-manifold, 
guided by the nonzero triple products.
The basic ingredients are $L_{2,4}$, $Bo$
and the link of Figure 3 below.

Let $K$ be a knot in $S^3$ with exterior $X(K)$,
meridian $\mu_K$ and longitude $\lambda_K$.
A $p$-framed surgery on  $K$ is determined by  a homeomorphism 
$\phi:\partial{X(K)}\to{S^1\times{D^2}}$ such that 
$\phi(\lambda_K+p\mu_K)=\partial{D^2}$.
After composition with a self-homeomorphism of $S^1\times{D^2}$,
if necessary, we may assume that $\phi(\mu_K)=S=S^1\times\{*\}$.
It then follows that $\phi(\lambda_K)$ is a simple closed curve $C_p$
representing the homology class $[\partial{D^2}]-p[S]$ in $H_1(S^1\times\partial{D^2})$.
If $p=0$ this curve clearly bounds a copy of $D^2$;
if $p=\pm2$ then it bounds a ribbon with a half-twist, 
i.e., a copy of the M\"obius band $Mb$.
[In general, $C_p$ bounds a $\mathbb{Z}/p\mathbb{Z}$-manifold.
If $p$ is even is there a natural desingularization?]

Let $M=M(L;\mathcal{F})$, where $L$ is an $m$-component link in which all pairwise linking numbers are even and in which each component has even framing.
The meridians for the link components represent a canonical basis for 
$H_1(M;\mathbb{F}_2)$, and the Kronecker duals of this basis give a
basis $\{x_1,\dots,x_m\}$ for $H^1(M;\mathbb{F}_2)$.
Let $F_i$ be a Seifert surface for the the $i$-th component $L_i$ in $S^3$.
The Poincar\'e dual of $x_i$ in $H_2(M;\mathbb{F}_2)$ is represented by the surface 
$\widehat{F}_i$ which is the union of $F_i$ with a 
spanning surface for the longitude $\lambda_{L_i}$ in $S^1\times{D^2}$.
If all the linking numbers are 0 or $\pm2$ then after attaching handles to $F_i$ if necessary,
we may assume that it does not meet any other component of $L$.

If all linking numbers are 0 then we may assume that $F_i$ is orientable,
and then $x_i^2$ and $x_i^3$ are supported by the cocore of the surgery.
In this case $x_i^2=0$ if $L_i$ has framing 0, 
while $x_i^3\not=0$ if the framing is $\pm2$.
Moreover the triple products $x_i^2x_j$ and $x_ix_jx_k$ then depend only on 
the sublinks $L_i\cup{L_j}$ and $L_i\cup{L_j}\cup{L_k}$ involved 
and the framings of these components.
(If the framings and linking numbers are even but some are not 0 or $\pm2$ then 
the components bounds immersed spanning surfaces which are disjoint 
from the other components.
Our approach might extend to these cases, 
but we do not know whether the intersection theory applies adequately 
for transversely immersed surfaces.)

The (2,4)-torus link is the simplest 2-component link with linking number 2.
Its link group has the presentation 
$\langle{a,b}\mid{bab^{-1}a=ab^{-1}ab}\rangle$.
The longitudes are $\ell_a=aba^{-1}b$ and $\ell_b=bab^{-1}a$,
and 0-framed surgery gives the quaternionic 3-manifold $S^3/Q(8)$.

Let $u$ and $v$ be the Kronecker duals of the images of the meridians $a$ and $b$
in $H_1(S^3/Q(8);\mathbb{F}_2)$.
Then $u^2$ and $v^2$ are nonzero, 
since $Q(8)^{ab}$ has exponent 2.
On the other hand, $u^3=v^3=0$,
and so $u^2\not=v^2$ and $u^2v=uv^2\not=0$,  
by nonsingularity.
In this case the kernel of cup product is generated by $u\odot{u}+u\odot{v}+v\odot{v}$.
These are standard facts about $H^*(Q(8);\mathbb{F}_2)$, 
but can be explained in our terms as follows.
The Poincar\'e dual of $u$ is represented by a Klein bottle $\widehat{F}$
in $S^3/Q(8)$.
The intersection of two transverse copies of $\widehat{F}$ is a meridian for the other component, and can be pushed off $\widehat{F}$.
Hence the intersection of three mutually transverse copies of $\widehat{F}$ is empty.

If $L$ is the link of Figure 3 then $M(L)$ is the ``half-turn" flat 3-manifold $MT(-I_2)$ 
with $\pi_1MT(-I_2)\cong\mathbb{Z}^2\rtimes_{-I}\mathbb{Z}$.
Poincar\'e duality and consideration of subgroups and quotient groups
shows that 
\[H^*(MT(-I_2);\mathbb{F}_2)\cong
\mathbb{F}_2[t,u,v]/(t^2, u^3, v^3, tu^2, tv^2, tuv+u^2v,u^2v+uv^2).
\]

\noindent {\it Realization:}
Let $\mathcal{A}^*$ be an orientable $MS$-algebra and let 
$\rho=\dim_{\mathbb{F}_2}\mathcal{A}^1$.
Then $u^2v=uv^2$ and so $(u+v)^3=u^3+v^3$,
for all $u,v\in\mathcal{A}^1$.
Let $\nu:\odot^3\mathcal{A}^1\to\mathcal{A}^3$ be the triple product.
If $\{x_1,\dots,x_\rho\}$ is a basis for $\mathcal{A}^1$,
we let $\nu_{ijk}=\nu(x_i\odot{x_j}\odot{x_k})$ for $1\leq{i,j,k}\leq\rho$.
Given  $u,v,z\in\mathcal{A}^1$,
there are 10 possible triple products involving just these elements,
but their values are constrained by the Postnikov-Wu identity,
and the number of possibilities to consider may be reduced further by 
judicious choice of basis.

If $x^2=0$ for all $x\in\mathcal{A}^1$ then we may model $\mathcal{A}^*$ by 
$M(L)$,
where $L$ is a  $\rho$-component link with all nontrivial 3-component sublinks 
copies of $Bo$.

Suppose next that $Sq^1x_i=x_i^2\not=0$ for $i\leq\sigma$, for some $\sigma>0$.
We may assume that $\{x_{\sigma+1},\dots,x_\rho\}$ spans $\mathrm{Ker}(Sq^1)$,
and so $Sq^1$ maps $X=\langle{x_1,\dots,x_\sigma}\rangle$ bijectively to
$Sq^1X=\langle{x_1^2,\dots,x_\sigma^2}\rangle$.
If  $x^3=0$ for all $x\in\mathcal{A}^1$ then
after a change of basis, if necessary, 
we may assume that the restricted pairing is block diagonal, 
with diagonal blocks $\left[\smallmatrix0&1\\1&0\endsmallmatrix\right]$.
For if $x_1^2y\not=0$ for some $y\in\mathcal{A}^1$ then $x_1y^2\not=0$,  
and we may set $x_2=y$.
We then modify the other basis elements $x_j$ with $j>2$ within their cosets
{\it mod\/} $\langle{x_1,x_2}\rangle$ so that  $x_1^2x_j=x_2^2x_j=0$.
We continue by induction on $k$ to modify the $x_{2j-1}$ and $x_{2j}$
with $j>k$  so that $x_{2j-1}^2x_{2j}\not=0$
and $x_i^2x_{2j-1}=x_i^2x_{2j}=0$ for all $i\leq2k$.
In particular,  $\sigma$ is even.

We start with a $\rho$-component link which splits as a union of $\frac\sigma2$ 
copies of $L_{2,4}$ and a trivial link with $\rho-\sigma$ components,
and construct the desired link $L$ by modifying some trivial 3-tangles, 
as in Figure 1.$(b)$.
The generators of $\mathcal{A}^1$ correspond 
to the Kronecker duals of the meridians of $L$.
If $\nu_{ijk}\not=0$ for some $i<j<k$ and $x_i^2x_j=x_i^2x_k=x_j^2x_k=0$ 
then we arrange for $L_i\cup{L_j}\cup{L_k}$ to be a copy of $Bo$.
However if (say) $x_i^2x_j\not=0$ then we use instead the link of Figure 3,
in which the components $L_i$ and $L_j$ are linked.
We give all components framing 0.

\setlength{\unitlength}{1mm}
\begin{picture}(95,70)(-25,-18)

\put(5,20){\line(0,1){15}}
\qbezier(5,20)(5,10)(15,10)
\qbezier(5,35)(5,45)(15,45)
\put(15,45){\line(1,0){10}}
\qbezier(25,45)(35,45)(35,35)

\put(34,34){\line(1,0){22}}
\qbezier(34,34)(32,34)(32,32)
\qbezier(32,32)(32,30)(34,30)
\put(35,27){\line(0,1){6}}
\qbezier(36,30)(38,30)(38,28)
\qbezier(38,28)(38,26)(36,26)
\put(34,26){\line(1,0){2}}
\qbezier(34,26)(32,26)(32,24)
\qbezier(32,24)(32,22)(34,22)
\put(35,20){\line(0,1){5}}
\qbezier(36,22)(39,22) (39,19)
\qbezier(39,17)(39,12)(44,12)
\put(44,12){\line(1,0){10}}
\qbezier(25,10)(35,10)(35,20)
\put(22,10){\line(1,0){4}}

\put(17,5){\line(0,1){3}}
\qbezier(17,8)(17,18)(27,18)
\put(27,18){\line(1,0){6.5}}
\qbezier(17,5),(17,-5)(27,-5)
\put(27,-5){\line(1,0){18}}

\put(36,18){\line(1,0){5}}
\qbezier(41,18)(42.5,18)(42.5,16.5)
\qbezier(41,15)(42.5,15)(42.5,16.5)

\qbezier(20,7.5)(20,15)(27.5,15)
\put(27.5,15){\line(1,0){5.5}}
\put(35,15){\line(1,0){3}}
\put(20,5){\line(0,1){2.5}}
\qbezier(20, 5)(20,-2)(27,-2)
\put(27,-2){\line(1,0){18}}
\qbezier(45,-2)(52,-2)(52,5)
\put(52,5){\line(0,1){6}}
\put(55,5){\line(0,1){9}}
\qbezier(55,14)(55,15.5)(53.5,15.5)
\qbezier(52,14)(52,15.5)(53.5,15.5)
\put(52,13){\line(0,1){1}}

\qbezier(45,-5)(55,-5)(55,5)
\qbezier(56,12)(66,12)(66,22)
\qbezier(56,34)(66,34)(66,24)
\put(66,22){\line(0,1){2}}

\put(10,-13){Figure 3.  Linking numbers 0,  0, 2.}

\end{picture}

If there is an $x$ such that $x^3\not=0$ then we may assume that $x_1^3\not=0$.
After replacing $x_i$ by $x_i+x_1$,  if necessary, 
we may assume that $x_1^2x_i=0$ for all $i>1$.
Hence the only possible nonzero triple products involving $x_1$ are of the form
$xyz$ with $x,y,z$ linearly independent.
In order to achieve this we choose our link so that all linking numbers 
$\ell(L_1,L_i)$ with $i>1$ are 0,
and the framing of $L_1$ is 2.
In this case $\mathrm{Ker}(Sq^1)$ has odd codimension.
The construction proceeds as before.
(With the above choices the multiplication pairing between $X$ and $Sq^1X$ 
is again block diagonal,   but there may be several basis elements $x$ with $x^3\not=0$,
and so the diagonal blocks are $[1]$ and 
$\left[\smallmatrix0&1\\1&0\endsmallmatrix\right]$.)

\noindent{\it Examples.}
If $M$ is orientable and $\rho=\beta_1(M;\mathbb{F}_2)=1$ then 
$H^*(M;\mathbb{F}_2)\cong\mathbb{F}_2[x]/(x^4)$ or $\mathbb{F}_2[x,u]/(x^2, u^2)$, 
with $x\in{H^1(M;\mathbb{F}_2)}$ and $u\in{H^2(M;\mathbb{F}_2)}$.
The simplest examples are $\mathbb{RP}^3=L(2,1)$, 
$S^1\times{S^2}$ and $L(4,1)$.

It follows from the Postnikov-Wu identity that $x^3+y^3=(x+y)^3$
for all $x,y$ of degree 1.
Hence if $\{x,y\}$ is a basis for $H^1(M;\mathbb{F}_2)$ 
such that $x^3\not=0$ then we may assume that $y^3\not=0$ also.
Nonsingularity of Poincar\'e duality then implies that $xy=0$,
and so $x\odot{y}$ generates the kernel of cup product.
For example,  $\mathbb{RP}^3\#\mathbb{RP}^3$.
Otherwise,  if $\{x,y\}$ is a basis for $H^1(M;\mathbb{F}_2)$ and $z^3=0$ 
for all $z$ of degree 1 then $x^2y=xy^2\not=0$,
and the kernel of cup product is generated by $x\odot{x}+x\odot{y}+y\odot{y}$, 
as for the quaternion manifold $S^3/Q(8)$.

Two more 3-component links shall play a role in the nonorientable case.

Let $L$ be the link of Figure 4, and let $A,P$ and $X$ be the classes in 
$H_1(M(L);\mathbb{Z})$ represented by the meridians $a,p$ and $x$.
Then $2(A+P)=2X=0$.
Let $\{u,v,z\}$ be the basis for $H^1(M(L);\mathbb{F}_2)$ which is Kronecker dual to the basis for $H_1(M(L);\mathbb{F}_2)$ represented by the meridians
(so that $u=a^*$, $v=p^*$ and $z=x^*$).
Then $u^3=v^3=z^3=0$ (since the components have framing 0),
while $z^2\not=0$ (since $2X=0$).
Let $\langle\langle{x}\rangle\rangle$ be the normal closure of the image of $x$ in 
$\pi_1M(L)$. 
Then $G=\pi_1M(L)/\langle\langle{x}\rangle\rangle$
has presentation $\langle{a,p}\mid(ap^{-1})^2=1\rangle$.
Then $u^2=uv=v^2$, since these relations hold in $H^*(G;\mathbb{F}_2)$,
and so  $u^2v=uv^2=u^3=0$.
Hence $uz^2=vz^2=uvz\not=0$, by nonsingularity, and so
\[
H^*(M(L);\mathbb{F}_2)\cong\mathbb{F}_2[u,v,z]/(u^2+uv,v^2+uv, u^2v,
u^2z+uz^2,v^2z+vz^2,z^3).
\]

\setlength{\unitlength}{1mm}
\begin{picture}(95,66)(-25,-18)

\put(5,20){\line(0,1){15}}
\qbezier(5,20)(5,10)(15,10)
\qbezier(5,35)(5,45)(15,45)
\put(15,45){\line(1,0){10}}
\qbezier(25,45)(35,45)(35,35)

\put(26,7){$a$}
\put(22,9){$\to$}

\put(47,8){$x$}
\put(44,11){$\to$}

\put(35,-6){$\to$}
\put(56,0){$p$}

\put(34,34){\line(1,0){22}}
\qbezier(34,34)(32,34)(32,32)
\qbezier(32,32)(32,30)(34,30)
\put(35,27){\line(0,1){6}}
\qbezier(36,30)(38,30)(38,28)
\qbezier(38,28)(38,26)(36,26)
\put(34,26){\line(1,0){2}}
\qbezier(34,26)(32,26)(32,24)
\qbezier(32,24)(32,22)(34,22)
\put(35,20){\line(0,1){5}}
\qbezier(36,22)(39,22) (39,19)
\qbezier(39,17)(39,12)(44,12)
\put(44,12){\line(1,0){7}}
\put(53,12){\line(1,0){3}}
\qbezier(25,10)(35,10)(35,20)
\put(22,10){\line(1,0){4}}

\put(17,5){\line(0,1){3}}
\qbezier(17,8)(17,18)(27,18)
\put(27,18){\line(1,0){6.5}}
\qbezier(17,5),(17,-5)(27,-5)
\put(27,-5){\line(1,0){18}}

\put(36,18){\line(1,0){5}}
\qbezier(41,18)(42.5,18)(42.5,16.5)
\qbezier(41,15)(42.5,15)(42.5,16.5)

\qbezier(20,7.5)(20,15)(27.5,15)
\put(27.5,15){\line(1,0){5.5}}
\put(35,15){\line(1,0){3}}
\put(20,5){\line(0,1){2.5}}
\qbezier(20, 5)(20,-2)(27,-2)
\put(27,-2){\line(1,0){18}}
\qbezier(45,-2)(52,-2)(52,5)
\put(52,5){\line(0,1){9}}
\put(55,5){\line(0,1){6}}
\qbezier(55,14)(55,15.5)(53.5,15.5)
\qbezier(52,14)(52,15.5)(53.5,15.5)
%\put(52,13){\line(0,1){1}}

\qbezier(45,-5)(55,-5)(55,5)
\qbezier(56,12)(66,12)(66,22)
\qbezier(56,34)(66,34)(66,24)
\put(66,22){\line(0,1){2}}

\put(5,-13){Figure 4.  Linking numbers 0,  2, 2.}

\end{picture}

The link of Figure 5 has a 3-fold symmetry (about the axis through $\bullet$)
which permutes the components.
If $A,P$ and $X$ are the classes in $H_1(M(L);\mathbb{Z})$ represented by 
the  meridians then $2A+2P=2A+2X=2P+2X=0$.
Hence $H_1(M(L);\mathbb{Z})\cong
\mathbb{Z}/4\mathbb{Z}\oplus\mathbb{Z}/2\mathbb{Z}\oplus\mathbb{Z}/2\mathbb{Z}$.
Let $\{u,v,z\}$ be the basis for $H^1(M(L);\mathbb{F}_2)$ which is Kronecker dual to 
the basis for $H_1(M(L);\mathbb{F}_2)$ represented by $\{a,p,x\}$.
Then $u^2$, $v^2$ and $z^2$ are all nonzero, with the sole relation $u^2+v^2+z^2=0$
(since $u+v+z$ lifts to an epimorphism onto $\mathbb{Z}/4\mathbb{Z}$).
It is then a straightforward exercise using nonsingularity and the 3-fold symmetry
to show that $u^2v=u^2z=v^2z=uvz\not=0$, and so
$H^*(M(L);\mathbb{F}_2)\cong\mathbb{F}_2[u,v,z]/\mathcal{I}$, where
\[
\mathcal{I}=(u^2+v^2+z^2, u^2v+uv^2,u^2z+uz^2,  
v^2z+vz^2,u^2v+uvz, v^2z+uvz).
\]

\setlength{\unitlength}{1mm}
\begin{picture}(90,70)(-15,0)

\put(20,46){$a$}
\put(41,16){$p$}
\put(70,46){$z$}

\put(45,39){$\bullet$}

\put(22.1,40){$\downarrow$}
\put(67.1,40){$\uparrow$}
\put(46,19.15){$\to$}

\put(26,20){\line(1,0){29}}
\put(28,32){\line(1,0){6}}
\put(36,32){\line(1,0){22}}

\qbezier(23,57)(23,63)(29,63)
\qbezier(29,63)(35,63)(35,57)
\qbezier(20,26)(20,32)(26,32)
\qbezier(20,26)(20,20)(26,20)

\put(23,33){\line(0,1){24}}
\put(35,49){\line(0,1){6}}
\put(35,30){\line(0,1){17}}
\put(27,30){\line(0,1){3}}

\qbezier(23,30)(23,28)(25,28)
\qbezier(25,28)(27,28)(27,30)
\qbezier(27,33)(27,35)(29,35)
\qbezier(29,35)(31,35)(31,33)

\qbezier(31,30)(31,28)(33,28)
\qbezier(33,28)(35,28)(35,30)

\put(36,60){\line(1,0){20}}
\put(33,48){\line(1,0){19}}

\qbezier(52,48)(56,48)(56,44)
\qbezier(56,60)(68,60)(68,48)

\put(56,33){\line(0,1){11}}
\put(68,23){\line(0,1){25}}
\qbezier(62,17)(68,17)(68,23)
\qbezier(56,23)(56,17)(62,17)

\put(56,25){\line(0,1){6}}

\put(57,28){\line(1,0){1}}
\qbezier(58,20)(60,20)(60,22)
\qbezier(58,24)(60,24)(60,22)
\qbezier(58,32)(60,32)(60,30)
\qbezier(58,28)(60,28)(60,30)
\qbezier(55,24)(53,24)(53,26)
\qbezier(53,26)(53,28)(55,28)
\put(55,24){\line(1,0){3}}

\qbezier(33,60)(31,60)(31,58)
\qbezier(31,58)(31,56)(33,56)
\put(33,56){\line(1,0){3}}
\qbezier(36,56)(38,56)(38,54)
\qbezier(36,52)(38,52)(38,54)
\qbezier(33,52)(31,52)(31,50)
\qbezier(31,50)(31,48)(33,48)

\put(23,5){Figure 5. All linking numbers 2.}

\end{picture}

\section{links in $S^2\tilde\times{S^1}$}

Every closed nonorientable 3-manifold may be obtained by
surgery on a framed link in (either of) $S^2\tilde\times{S^1}$ or 
$\mathbb{RP}^2\times{S^1}$ \cite{Li63}.
We shall construct links in $S^2\tilde\times{S^1}$ from tangles in $D^2\times[0,1]$
with endpoints on the discs $D^2\times\{0\}$ and $D^2\times\{1\}$ by identifying these discs via reflection across a diameter of $D^2$ to get a link in $Z=D^2\tilde\times{S^1}$,
and then attaching another copy of $Z$ to get the double
$DZ=Z\cup_{Kb}Z=S^2\tilde\times{S^1}$.
We shall assume that the endpoints of the tangle are paired under the reflection, 
and lie along the diameter fixed by the reflection.
(It is not hard to see that every link in $S^2\tilde\times{S^1}$ arises in this way,
but we shall not need to prove this.)

\setlength{\unitlength}{1mm}
\begin{picture}(95,57)(-15,25)

\put(1,67.5){$a_1$}
\put(1,60.5){$a_2$}
\put(-12.5,58){$\omega$}
\put(-9,59){$\downarrow$}

\linethickness{1pt}
\put(7,70){\line(0,1){5}}
\qbezier(7,75)(12, 69)(7,63)
\put(87,70){\line(0,1){5}}

\thinlines
\qbezier(-8,60)(-8,75)(7,75)
\qbezier(-8,60)(-8,45)(7,45)
\qbezier(7,45)(22,45)(22,60)
\qbezier(7,75)(22,75)(22,60)
\put(7,45){\line(0,1){15}}

\put(7,63){\line(0,1){4}}

\put(6.1,74){$\bullet$}
\put(6.5,61.4){.}
\put(6.5, 68.4){.}

\qbezier(5.5,61.5)(5.5,63)(7,63)
\qbezier(5.5,61.5)(5.5,60)(7,60)
\qbezier(7,63)(8.5,63)(8.5,61.5)
\qbezier(7,60)(8.5,60)(8.5,61.5)

\qbezier(5.5,68.5)(5.5,70)(7,70)
\qbezier(5.5,68.5)(5.5,67)(7,67)
\qbezier(7,70)(8.5,70)(8.5,68.5)
\qbezier(7,67)(8.5,67)(8.5,68.5)

\qbezier(72,60)(72,75)(87,75)
\qbezier(72,60)(72,45)(87,45)
\qbezier(87,45)(102,45)(102,60)
\qbezier(87,75)(102,75)(102,60)
\put(87,45){\line(0,1){15}}

\put(87,63){\line(0,1){4}}

\put(86,74){$\bullet$}
\put(86.5,61.4){.}
\put(86.5, 68.4){.}

\qbezier(85.5,61.5)(85.5,63)(87,63)
\qbezier(85.5,61.5)(85.5,60)(87,60)
\qbezier(87,63)(88.5,63)(88.5,61.5)
\qbezier(87,60)(88.5,60)(88.5,61.5)

\qbezier(85.5,68.5)(85.5,70)(87,70)
\qbezier(85.5,68.5)(85.5,67)(87,67)
\qbezier(87,70)(88.5,70)(88.5,68.5)
\qbezier(87,67)(88.5,67)(88.5,68.5)

\qbezier(87,75)(82, 69)(87,63)

\put(64,58){$\omega^{-1}$}
\put(71.1,59){$\uparrow$}

\put(40,60){$\longmapsto$}

\put(0,37){$\omega=a_1a_2$}

\put(68,37){$\omega^{-1}=a_1^{-1}(a_1a_2^{-1}a_1^{-1})$}

\put(15,30){Figure 6. Reflection across the $Y$-axis}

\end{picture}

Let $L$ be a link in $Z$ with orientation preserving components.
Then each component has an even number of endpoints at each end of $D^2\times[0,1]$,
and so bounds a surface in $Z$ which is a union of a spanning surface for 
a knot or link in $D^2\times[0,1]$ with a number of twisted ribbons. 
Moreover, 
we may assume that the twisted ribbons for the various components are all disjoint.
Although there is no canonical choice of longitude, we may choose ``longitudes"
which are $\mathbb{F}_2$-homologically trivial in $Z$,
and we shall let $Y(L)$ be the result of  surgery on $L$ in $Z$ to kill these longitudes.
Let $\Xi(L)=Y(L)\cup{Z}$ be the closed 3-manifold obtained by surgery on $L$ in $DZ$.
We shall take the core of the second copy of $Z$ as the standard orientation-reversing loop in $\Xi(L)$.

We may write down a presentation for the fundamental group of the link exterior 
in terms of meridians and ``Wirtinger" relations in the usual way, 
but there is a slight complication due to the reflection.
We assume that the fixed diameter of $D^2$ is the intersection with the $Y$-axis, 
and take the top of this diameter in $D^2\times\{0\}$ as the basepoint.
The meridian for each arc $\alpha$ of the tangle is represented by a loop from the basepoint which passes ``in front of" all intermediate arcs, around $\alpha$, 
and back over the other arcs to the basepoint.
The effect of the reflection is evident from Figure 6,
in which the loops $a_1$ and $a_2$ are based via the heavy curves, 
and $a_1$, $a_2$ and $\omega$ go anticlockwise around the adjacent circles.
(Similarly, if $a_1,\dots,a_n$ are generators corresponding to $n$ punctures down the diameter then $\omega=a_1\dots{a_n}$ and 
$\omega^{-1}=a_1^{-1}(a_1a_2^{-1}a_1^{-1})\dots(\omega{a_n}^{-1}\omega^{-1})$.)

We shall illustrate this by giving constructions of $S^1\times\mathbb{RP}^2$
and $S^1\times{Kb}$.
The fundamental group of the complement of the tangle in Figure 7 
has a presentation $\langle{a,b,c}\mid{aba^{-1}=c}\rangle$.
Identifying the ends gives a knot in $Z$ whose exterior
has fundamental group with the presentation
\[
\langle{a,b,c,t}\mid{aba^{-1}=c},~tat^{-1}=c^{-1},~tbt^{-1}=ca^{-1}c^{-1}\rangle.
\]
Adding another copy of $Z$ gives a knot in $S^2\tilde\times{S^1}$
whose group has presentation
\[
\langle{a,b,c,t}\mid{aba^{-1}=c},~tat^{-1}=c^{-1},~tbt^{-1}=ca^{-1}c^{-1},~ab=1\rangle,
\]
since $\omega=ab$ bounds a fibre of the second copy of $D^2\tilde\times{S^1}$.
If we now perform surgery on the knot to kill $ta^{-1}ta$ 
the presentation reduces to $\langle{a,t}\mid{ta=at},~t^2=1\rangle$,
and so the fundamental group is
$\mathbb{Z}\oplus(\mathbb{Z}/2\mathbb{Z})$.

\setlength{\unitlength}{1mm}
\begin{picture}(95,50)(-15,30)

\linethickness{1pt}
\qbezier(2,60)(2,75)(17,75)
\qbezier(2,60)(2,45)(17,45)
\qbezier(17,45)(32,45)(32,60)
\qbezier(17,75)(32,75)(32,60)

\qbezier(77,45)(92,45)(92,60)
\qbezier(77,75)(92,75)(92,60)

\thinlines
\put(16,74){$\bullet$}
\put(16.5,54.4){.}
\put(16.5, 64.4){.}
\put(76.5,54.4){.}
\put(76.5, 64.4){.}

\put(23,63.8){$\to$}
\put(23,53.8){$\to$}
\put(70,63.8){$\to$}
\put(70,53.8){$\to$}
\put(37,66){$a$}
\put(37,56){$b$}
\put(67,66){$c$}

\put(17,75){\line(1,0){60}}
\put(17,45){\line(1,0){60}}

\put(17,54.7){\line(1,0){13}}
\put(33,54.7){\line(1,0){8}}
\qbezier(41,54.7)(47,54.7)(52,58)
\qbezier(54,60)(59,64.7)(67,64.7)

\put(17,64.7){\line(1,0){13}}
\put(33,64.7){\line(1,0){8}}
\qbezier(41,64.7)(46,64.7)(49,62)
\qbezier(49,62)(53,59)(57,57)
\qbezier(57,57)(61,54.7)(65,54.7)

\put(65,54.7){\line(1,0){12}}

\put(67,64.7){\line(1,0){10}}

\put(21,35){Figure 7.  A link for $S^1\times\mathbb{RP}^2$}

\end{picture}

From another point of view, we may see directly that surgery on 
$U_o=\{*\}\times{S^1}\subset{S^1}\times\mathbb{RP}^2$
gives $S^2\tilde\times{S^1}$,
as follows.
Let $\theta$ be the reflection of $S^2$ across an equator.
Then $S^2\tilde\times{S^1}\cong{MT(\theta)}$, 
the mapping torus of $\theta$.  
Since $\theta$ swaps the ``polar zones" of $S^2$, the mapping torus of the 
restriction of $\theta$ to the union of the polar zones is a solid torus $D^2\times{S^1}$
(representing $2\times$ a generator of $H_1(S^2\tilde\times{S^1};\mathbb{Z})$).
The complement of the polar zones is an annulus $\mathbb{A}\cong{S^1}\times[-1,1]$,
and $\theta$ acts on this product via $\theta(s,x)=(s,-x)$,
for all $s\in{S^1}$ and $-1\leq{x}\leq1$. 
Hence $MT(\theta|_\mathbb{A})\cong{S^1}\times{Mb}$.
Since $S^1\times\mathbb{RP}^2=S^1\times{Mb}\cup{S^1}\times{D^2}$
and $S^2\tilde\times{S^1}=MT(\theta|_\mathbb{A})\cup{D^2}\times{S^1}$,
the claim follows.
Note also that $H_1(S^2\tilde\times{S^1};\mathbb{Z})$ is generated 
by a meridian for $U_o$.

The fundamental group of the complement of the tangle in Figure 7 has a presentation
$\langle{a,c,q}\mid{aqa^{-1}=cqc^{-1}}\rangle$.
Identifying the ends gives a link in $Z$ whose exterior
has fundamental group with the presentation
\[
\langle{a,b,c,d,q,t}\mid {aqa^{-1}=cqc^{-1}},~qaq^{-1}=b,~qcq^{-1}=d,
\]
\[
tat^{-1}=c^{-1},~tbt^{-1}=cd^{-1}c^{-1}\rangle.
\]
In this case we may add another copy of $Z$ so that
$\omega=ab^{-1}$ bounds a disc.
Adding the relation $ab^{-1}=1$ and then killing the longitudes
$\ell_q=a^{-1}c$ and $\ell_a=q^{-1}t^{-1}qt$ gives the presentation
\[\langle{a,q,t}\mid{tat^{-1}=a^{-1}},~aq=qa,~qt=tq\rangle.
\]
Hence the resulting 3-manifold is $S^1\times{Kb}$.

If instead we give each of the link components  a nonzero even framing 
(so that we kill $q^{-1}t^{-1}qta^{2k}$ and $a^{-1}cq^{2m}$) then 
the resulting group is a semidirect product $\mathbb{Z}^2\rtimes_A\mathbb{Z}$,
where $A=\left[\smallmatrix-1&-2k\\2m&4km+1\endsmallmatrix\right]$.
This is the group of a nonorientable $\mathbb{S}ol^3$-manifold 
$MT(A)$ if $km\not=0$.
Taking $k=m=1$ gives an example with $\rho=3$, 
$w^2=0$ and $u^3=v^3=wuv\not=0$.
(See \cite[page 198]{Hi14}.
In this reference $\rho$ is the orientation character, called $w$ here.)

\setlength{\unitlength}{1mm}
\begin{picture}(95,52)(-15,30)

\linethickness{1pt}
\qbezier(2,60)(2,75)(17,75)
\qbezier(2,60)(2,45)(17,45)
\qbezier(17,45)(32,45)(32,60)
\qbezier(17,75)(32,75)(32,60)

\qbezier(77,45)(92,45)(92,60)
\qbezier(77,75)(92,75)(92,60)

\thinlines
\put(16,74){$\bullet$}
\put(16.5,54.4){.}
\put(16.5, 64.4){.}

\put(21,63.8){$\to$}
\put(23,53.8){$\gets$}

\put(70,63.8){$\to$}
\put(72,53.8){$\gets$}

\put(53,57){$\gets$}

\put(35,66){$a$}
\put(35,56){$b$}
\put(68,66){$c$}
\put(68,56){$d$}

\put(50,55){$q$}

\put(17,75){\line(1,0){60}}
\put(17,45){\line(1,0){60}}

\put(76.5,54.4){.}
\put(76.5, 64.4){.}

\put(17,54.7){\line(1,0){13}}
\put(33,54.7){\line(1,0){6}}
\qbezier(39,54.7)(43,54.7)(43,57)

\put(17,64.7){\line(1,0){13}}
\put(33,64.7){\line(1,0){6}}
\qbezier(39,64.7)(43.7,64.7)(43.7,60.7)

\put(67,54.7){\line(1,0){10}}
\qbezier(67,54.7)(63,54.7)(63,57)

\put(66,64.7){\line(1,0){11}}
\qbezier(66,64.7)(62,64.7)(62,60.7)

\put(42,58){\line(1,0){22}}
\qbezier(40,60)(40,58)(42,58)
\qbezier(40,60)(40,62)(42,62)
\qbezier(64,58)(66,58)(66,60)
\qbezier(66,60)(66, 62)(64,62)
\put(45,62){\line(1,0){16}}

\put(10,35){Figure 8.  A link for $S^1\times{Kb}$ and other $MT(A)$s}

\end{picture}

\section{the nonorientable case}

Let $\mathcal{A}^*$ be an $MS$-algebra with distinguished element $w$
and rank $\rho=\dim\mathcal{A}^1$.
The Postnikov-Wu identity is now 
$wxy =x^2y+xy^2\quad\mathrm{for~all}~x,y\in\mathcal{A}^1$.
Hence $wx^2=0$ for all $x$ of degree 1.
In particular, $w^3=0$.

We shall show that  $\mathcal{A}^*$ is the $\mathbb{F}_2$-cohomology ring 
of a closed 3-manifold with orientation character corresponding to $w$.
In our constructions we shall reserve $w$ for the orientation character.
In examples with $\rho$ small Poincar\'e duality considerations may 
imply that some products $x^2$ or $xy$ are 0.
This is not a hindrance.
However we should consider the possible values of cubes $x^3$.

If $wx=0$ for all $x\in\mathcal{A}^1$ then $w^2=0$,
and $\mathcal{A}^*$ may be realized by the connected sum
of  $S^2\tilde\times{S^1}$ with an orientable closed 3-manifold.
(More generally, if $x\in\mathcal{A}^1$ is nonzero and $xy=0$
for all $y\in\mathcal{A}^1$ then it suffices to consider a subalgebra of rank $\rho-1$.)
Thus we may assume that $w\bullet:\mathcal{A}^1\to\mathcal{A}^2$ is nontrivial.

We begin by identifying the smallest $MS$-algebras of interest,
which have $\rho=2$ or 3 and $w\bullet\not=0$.
Since $w\not=0$ there is a basis $\{w=x_1,x_2,\dots,x_\rho\}$ for $\mathcal{A}^1$.
We shall modify the basis elements $x_i$ with $i>1$ so as to simplify the multiplication scheme.
Let $\sigma$ be the rank of $w\bullet:\mathcal{A}^1\to\mathcal{A}^2$.
Then $\sigma>0$, since $w\bullet\not=0$.
If $w^2\not=0$ we may choose the basis so that 
$\{w^2,wx_2,\dots,wx_\sigma\}$ are linearly independent and $wx_i=0$ for all $i>\sigma$.
If $w^2=0$ we assume that $\{wx_2,\dots,wx_{\sigma+1}\}$ are linearly independent 
and $wx_i=0$ for all $i>\sigma+1$.

If $wu\not=0$ then there is a $v$ such that $wuv\not=0$, by nonsingularity.
The elements $wu$ and $wv$ must be linearly independent, since $wu^2=0$.
If $w^2\not=0$ we may assume that $w^2x_2\not=0$.
After replacing $x_i$ by $x_i+w$ or $x_i+w+x_2$, if necessary,
we may assume that $w^2x_i=wx_2x_i=0$ for all $i>2$.
If $\sigma>2$ we may assume that $wx_3x_4\not=0$.
Further modifications to the basis elements $x_i$ with $i>4$ 
then ensure that $wx_3x_i=wx_4x_i=0$ for all $i>4$.
Iterating this process, we see that $\sigma$ must be even, 
and the partial basis $\{w,x_2,\dots,x_\sigma\}$ is partitioned into consecutive pairs
whose triple products with $w$ are nonzero, and are the only such  products $wx_ix_j$.
A similar argument applies if $w^2=0$, and $\sigma$ is again even.
In particular,  if $w\bullet\not=0$ then $\rho>1$.
We may also choose the basis elements $x_i$ with $i>\sigma$ so that
$x_i^2x_j\not=0$ for at most one $j>i$, as in the orientable case,
but it is not clear that we can do this in general.

We shall realize these by closed 3-manifolds
obtained by surgery on framed links $L\subset{S^2\tilde\times{S^1}}$.
We may assume that such links are disjoint from a standard orientation 
reversing loop in $S^2\tilde\times{S^1}$. 
Let $B(L)$ be the complement of a small open tubular neighbourhood
of this loop  in the closed 3-manifold resulting from surgery on $L$.
Then $\partial{B(L)}\cong{Kb}$.
Let $N=\overline{S^2\setminus(\cup_{i=1}^k{D_i})}$,
where the $D_i$ are disjoint small discs in $S^2$ 
which are centred on the equator and invariant under $\theta$.
Then $Y=MT(\theta|_N)$ has boundary $\partial{Y}\cong\sqcup^kKb$,
and we may attach $k$ such 3-manifolds $B(L(i))$ to $Y$, 
along the various boundary components.
If $M$ is the resulting 3-manifold  and we specify an orientation-reversing curve
$\gamma_w$ then the generators of $H_1(M;\mathbb{F}_2)$
apart from the image of $\gamma_w$ may be chosen to be
Kronecker dual to the given basis for the $MS$-algebra.
These arise from loops which lift to the double cover 
$S^2\times{S^1}$ and have product neighbourhoods.
We may then meld in products involving the $x_i$ with $wx_i=0$ by adding 
copies of $Bo$ and $L_{2,4}$, as appropriate,  
to the existing link in $S^2\tilde\times{S^1}$.

There are four cases with $1\leq\rho\leq3$ that we need consider.
If $\rho=1$ then $w^2=0$, by nonsingularity.
This case is realized by $S^2\tilde\times{S^1}$.
%The Poincar\'e dual of $w$ in $S^2\tilde\times{S^1}$ 
%is represented by the $S^2$ fibre.

If $w^2\not=0$ then after replacing $u=x_2$ by $u+w$, 
if necessary, we may assume that $u^3=0$.
%(We then have $u^2=0$, by nonsingularity.)
This case is realized by $S^1\times\mathbb{RP}^2$.
%The Poincar\'e duals of $w$, 
%$w^2$, $u$ and $wu$ in $S^1\times\mathbb{RP}^2$ 
%are represented by $T=S^1\times\mathbb{RP}^1$, $S=S^1\times\{*\}$, 
%$P=\{*\}\times\mathbb{RP}^2$ and $T\cap{P}=\{*\}\times\mathbb{RP}^1$, 
%respectively.

If $w^2=0$ and $\rho=3$ then we may assume that $wuv\not=0$,
for some $u,v\in\mathcal{A}^1$.
If $u^3=0$ or $v^3=0$ then after replacing $v$ or $u$ by $u+v$, 
if necessary,  we may assume that $u^3=v^3=0$, and that $u^2v\not=0$.
This case is realized by $S^1\times{Kb}$.
%The Poincar\'e duals of $w$,  $u$,  and $v$ in $S^1\times{Kb}$ 
%are represented by

The other basic case has $w^2=0$, $\rho=3$ and $wuv\not=0$,
and $u^3=v^3=(u+v)^3\not=0$.
This case is realized by the $\mathbb{S}ol^3$-manifold
$MT(\left[\smallmatrix-1&-2\\2&5\endsmallmatrix\right])$.

These  3-manifolds shall be our basic building blocks.
If $\Xi$ is one of these then in each case $H_1(\Xi;\mathbb{F}_2)$ 
has a preferred basis consisting of a ``standard" orientation-reversing loop 
and the images of meridians for the components of $L$, 
and there is a well-defined ``Kronecker dual" basis for $H^1(\Xi;\mathbb{F}_2)$.

\section{construction -- splicing}

In this section we shall show that every (nonorientable) $MS$-algebra is realized
by some (nonorientable) closed 3-manifold.
We begin by ``splicing" the examples of \S8 to realize $MS$-algebras for which 
$w\bullet$ is an isomorphism or has kernel $\langle{w}\rangle$.
To handle the general case we shall rely on the following simple observation.
Let $K$ be an orientable knot in a 3-manifold $M$, 
and let $N_{\mathcal{F}}$ be the result of surgery on $K$ in $M$ with framing 
$\mathcal{F}$.
Let $F_1$, $F_2$ and $F_3$ be mutually transverse closed surfaces in 
the knot exterior $X=M\setminus{K}$,
and let $x_i\in{H^1(N_{\mathcal{F}};\mathbb{F}_2)}$ be
the Poincar\'e dual of the class of $F_i$,
for $i\leq3$.
Then whether $x_1x_2x_3$ is 0 or not does not depend on the framing $\mathcal{F}$
determining the surgery, 
since any intersection of submanifolds in the exterior $X=M\setminus{K}$
is unchanged by any (Dehn) surgery on $K$.

We shall splice links in $S^2\tilde\times{S^1}$ together as follows.
Let $D$ and $D'$ be two small disjoint discs in the interior of $D^2$
which are invariant under the reflection across the $Y$-axis,  
and let $W=\overline{D^2\setminus{D\cup{D'}}}$.
Then $Z=W\tilde\times{S^1}\cup{D\tilde\times{S^1}}\cup{D'\tilde\times{S^1}}$.
Let $L$ and $L'$ be links with orientation-preserving components  in $D\tilde\times{S^1}$ and $D'\tilde\times{S^1}$, respectively.
Then $L$, $L'$ and $L\sqcup{L'}$ are links in $Z$, 
and there are degree-1 collapses from $\Xi(L\sqcup{L'})$ onto each of $\Xi(L)$ 
and $\Xi(L')$.
Hence $H^*(\Xi(L);\mathbb{F}_2)$ and $H^*(\Xi(L);\mathbb{F}_2)$ map injectively to
$H^*(\Xi(L\sqcup{L'});\mathbb{F}_2)$.
Since $H_1(\Xi(L);\mathbb{F}_2)$ is generated by the images of $t$ and the meridians for $L$ it follows that $H^*(\Xi(L\sqcup{L'});\mathbb{F}_2)$ is generated by the union
of the images of $H^*(\Xi(L);\mathbb{F}_2)$ and $H^*(\Xi(L);\mathbb{F}_2)$.
Each component of $L$ has image 0 in $H_1(D\tilde\times{S^1};\mathbb{F}_2)$, 
and so bounds a surface in $D\tilde\times{S^1}$.
It follows that the duals of a meridian from $L$ and one from $L'$ 
may be assumed disjoint, 
and so cup products of the corresponding basis elements for $H^1$ are 0.

If $w\bullet$ is an isomorphism then $w^2\not=0$ and
we may partition $\{x_1=w,\dots,x_\sigma\}$ into consecutive pairs
$\{x_{2i-1},x_{2i}\}$ with $wx_{2i-1}x_{2i}\not=0$ for all $i\leq\frac12\sigma$
and $wx_jx_k=0$ if $k>j+1$ or  $j$ is even and $k>j$.
(In the latter case we have $x_j^2x_k=x_jx_k^2$, 
by the Postnikov-Wu identity.)
The pair $\{w,x_2\}$ is realized  by $S^1\times\mathbb{RP}^2$,
while the other pairs are realized by $S^1\times{Kb}$ (if $x_{2i-1}^3=x_{2i}^3=0$) 
or $MT(\left[\smallmatrix-1&-2\\2&5\endsmallmatrix\right])$ 
(if $x_{2i-1}^3=x_{2i}^3\not=0)$.
Thus we may realize $\mathcal{A}^*$ in this case by assembling copies of the links of 
Figures 7 and 8 (with appropriate framings).

A similar argument applies if $\mathrm{Ker}(w\bullet)=\langle{w}\rangle$.
In this case we need just copies of the link of Figure 8 (with appropriate framings).

In general,  let $\tau=\rho-\sigma$ if $w^2\not=0$ and $\tau=\rho-\sigma-1$ if $w^2=0$,
and adjoin a trivial $\tau$-component link in a ball in $S^2\times(0,1)$ 
which is disjoint from the other components.
We must now consider the possibility that $x_ix_jx_k\not=0$, 
where $1<{i}\leq{j}\leq{k}$.
We may assume that $k>i$,  
since the value of $x_i^3$ is determined by the framing for $L_i$,
and shall write $x=x_i$,  $y=x_j$ and $z=x_k$ for simplicity of notation.

If $wxy=wxz=wyz=0$ then $x^2y=xy^2$, $x^2z=xz^2$ and $y^2z=yz^2$,
by the Postnikov-Wu identity, and one of the constructions for the orientable case applies.
However, having chosen the basis so as to normalize the nonzero products $wuv$,
we may not be able to reduce the number of possibilities for other triple products.
Thus we shall use tangles based on Figures 4 and 5 as well as those of Figures 1 and 3.

If  $x^2y=x^2z=y^2z=0$ then we use move $(b)$ of Figure 1.

If  $x^2z=y^2z=0$ but $x^2y\not=0$ then we use a tangle based on the link of Figure 3,
with $z$ corresponding to the component which is unlinked from each of the other components.

If  $x^2y=0$ but $x^2z=y^2z\not=0$ then we use a tangle based on the link of Figure 4,
with $z$ corresponding to the component which links each of the other components.

If all three of these products are nonzero then we use moves of type $(a)$ as in the link of Figure 5.

Thus we may assume that $wxy\not=0$.
In particular, $x\not=y$.
We may also assume that $z\not=x$ or $y$, 
and so $wxz=wyz=0$, by our choice of basis.
The Poincar\'e dual of $w$ in $S^2\tilde\times{S^1}$ 
is represented by a fibre $S^2$.
It is easily seen that this remains true after the tangle modifications used below,
and so these do not disrupt the values of $wxy$, $x^2y$ or $xy^2$.

If $x^2z=y^2z=0$ then we use a move $(b)$.

If $x^2z\not=0$ and $y^2z=0$ then we use a tangle based on the link of Figure 3.

If $x^2z=y^2z\not=0$ then we use move $(a)$ twice.

% (Is $wz=0$/$\not=0$ relevant?)

Finally, we choose the framings of the components lying entirely in $S^2\times(0,1)$
in accordance with the desired values of the $x_i^3$s.
 
\section{the kernel of cup product}

If $R=\mathbb{Z}$ or is a field of characteristic $\not=2$ then cup product 
induces homomorphisms $c_G^R:\wedge_2H^1(G;R)\to{H^2(G;R)}$.
We shall write $c_G$ and $c_G^p$ when $R=\mathbb{Z}$ or $\mathbb{F}_p$, 
respectively.
If $G$ is finitely generated then
\[
\mathrm{Ker}(c_G)\cong{Hom(I(G)/[G,I(G)],\mathbb{Z})}.
\]
Similarly, if $p$ is an odd prime then
\[
\mathrm{Ker}(c_G^p)\cong{Hom(G'X^p(G)/[G,G']X^p(G),\mathbb{F}_p)}.
\]
If $p=2$ then cup product induces  $c_G^2: \odot_2H^1(G;\mathbb{F}_2)\to
{H^2(G;\mathbb{F}_2)}$ with
\[
\mathrm{Ker}(c_G^2)\cong{Hom(X^2(G)/[G,X^2(G)]X^4(G),\mathbb{F}_2)}.
\]
In all cases the kernel is determined by $G/\gamma_3G=G/[G,G']$.
See  \cite{Hi85, Hi87, Li91} for proofs of the above assertions.
(In \cite{Su75} $\mathrm{Ker}(c_G)$  is said to be isomorphic to 
$Hom(G'/[G,G'],\mathbb{Z})$ ``mod torsion". 
In fact $I(G)/[G,I(G)]$ and $G'/[G,G']$ are commensurable.)

\begin{lemma}
Let $M$ be an orientable $3$-manifold, let $\pi=\pi_1(M)$
and let $F$ be a field of characteristic $\not=2$.
Then
\begin{enumerate}
\item{}if $\beta_1(\pi;F)<3$ then $c_\pi^F=0$;
\item{}if $\beta_1(\pi;F)=3$ then $c_\pi^F$ is either $0$ or is an isomorphism.
\item{}if $\beta_1(\pi;F)>3$ then $\mathrm{Ker}(c_\pi^F)\not=0$.
\end{enumerate}
\end{lemma}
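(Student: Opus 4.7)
The strategy is to reduce the statement for the group cohomology map $c_\pi^F$ to the corresponding cup product $c_M^F\colon\wedge_2H^1(M;F)\to H^2(M;F)$ on the manifold itself, and then to read off the conclusion from the alternating $3$-form $\mu_M$ recalled in \S2.

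First, I would compare $c_\pi^F$ with $c_M^F$ via the classifying map $f\colon M\to K(\pi,1)$. This induces an isomorphism $j_1\colon H^1(\pi;F)\to H^1(M;F)$, and, dually to the Hopf surjection $H_2(M;\mathbb{Z})\twoheadrightarrow H_2(\pi;\mathbb{Z})$, an injection $j_2\colon H^2(\pi;F)\hookrightarrow H^2(M;F)$ (most cleanly seen as the edge homomorphism of the Serre spectral sequence of $\widetilde M\to M\to K(\pi,1)$, using $H^1(\widetilde M;F)=0$). Naturality of cup product gives $j_2\circ c_\pi^F=c_M^F\circ\wedge_2 j_1$, and since $j_2$ is injective and $\wedge_2 j_1$ is an isomorphism, this yields an isomorphism $\ker c_\pi^F\cong\ker c_M^F$. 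In particular $c_\pi^F=0$ iff $c_M^F=0$, and $c_\pi^F$ is an isomorphism iff $c_M^F$ is.

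Next, I would invoke the description of $c_M^F=\gamma$ given in \S2. Since $M$ is orientable and $F$ has characteristic $\neq 2$, Poincar\'e duality identifies $H^2(M;F)$ with $H^1(M;F)^*$ via $D_2$, and the formula $\mu_M(a\wedge b\wedge c)=a(D_2\gamma(b\wedge c))$ shows that under this identification $c_M^F$ sends $b\wedge c$ to the linear functional $a\mapsto\mu_M(a\wedge b\wedge c)$. Set $d=\beta_1(\pi;F)=\dim_F H^1(M;F)$; Poincar\'e duality also gives $\dim_FH^2(M;F)=d$. The three cases now follow from linear algebra applied to the alternating $3$-form $\mu_M$. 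If $d<3$ then $\wedge_3 H^1(M;F)=0$, so $\mu_M=0$ and $c_M^F=0$. If $d=3$ then $\wedge_3H^1(M;F)$ is one-dimensional, so $\mu_M$ is either zero (giving $c_M^F=0$) or nondegenerate; in the nondegenerate case the radical of $\mu_M$ vanishes, the induced map $H^1(M;F)\to(\wedge_2 H^1(M;F))^*$ is an injection between spaces of equal dimension $\binom{3}{2}=3$, hence an isomorphism, and so is its transpose $c_M^F$. If $d>3$ then $\dim\wedge_2 H^1(M;F)=\binom{d}{2}>d=\dim H^2(M;F)$, so $c_M^F$ cannot be injective and $\ker c_M^F\neq 0$.

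Combining these with the isomorphism $\ker c_\pi^F\cong\ker c_M^F$ from the second paragraph gives (1), (2) and (3). The one step that needs real thought is the injectivity of $j_2$, because without it the passage from $\ker c_M^F\neq 0$ to $\ker c_\pi^F\neq 0$ in part (3) (and similarly the ``isomorphism'' conclusion of part (2)) would collapse; everything else is Poincar\'e duality together with counting dimensions of exterior powers.
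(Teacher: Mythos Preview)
Your argument is correct and follows essentially the same route as the paper: both reduce to Poincar\'e duality on $M$, use that a nonzero cup product of two degree-$1$ classes forces a nonzero triple product (hence $\beta_1\geq 3$), handle $\beta_1=3$ via the dichotomy ``$\mu_M$ is zero or nondegenerate'' on the one-dimensional space $\wedge_3H^1$, and finish $\beta_1>3$ by the dimension count $\binom{\beta}{2}>\beta$. The only substantive difference is that you make explicit the comparison between $c_\pi^F$ and $c_M^F$ via the injectivity of $j_2:H^2(\pi;F)\hookrightarrow H^2(M;F)$, whereas the paper's proof works directly in $H^*(M;F)$ while speaking of $c_\pi^F$, tacitly using that same injectivity; your version is if anything a bit more careful on this point.
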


\begin{proof}
If $\alpha\smile\xi\not=0$ for some $\alpha, \xi\in{H^1(M;F)}$
then $\alpha\smile\xi\smile\omega\not=0$, 
for some $\omega\in{H^1(M;F)}$,
by the nonsingularity of Poincar\'e duality.
Since $\alpha, \xi$ and $\omega$ must then be linearly independent, 
$\beta_1(M;F)\geq3$.

Suppose that $\beta_1(\pi;F)=3$.
Then every element of $\wedge_2H^1(M;F)$ is a product $v\wedge{w}$,
for if $a\not=0$ then 
$ax\wedge{y}+bx\wedge{z}+cy\wedge{z}=a^{-1}(ax-cz)\wedge(ay+bz)$.
Hence if $c_\pi^F$ is not an isomorphism then we may assume that
$\alpha\smile\xi=0$, where $\{\alpha, \xi,\omega\}$ is a basis for $H^1(M;F)$.
But then $\{\alpha\wedge\xi, \alpha\wedge\omega,\xi\wedge\omega\}$ 
is a basis for $\wedge_2H^1(M;F)$, and $\alpha\smile\xi\smile\omega=0$.
It  follows easily from the nonsingularity of Poincar\'e duality that $c_\pi^F=0$.

Let $\beta=\beta_1(M;F)$. Then $\beta_2(M;F)=\beta$ also, by Poincar\'e duality.
Hence $\dim_F\mathrm{Ker}(c_\pi^F)\geq\binom\beta2-\beta$,
and so $\mathrm{Ker}(c_\pi^F)\not=0$ if $\beta>3$.
\end{proof}

If $\pi\cong\mathbb{Z}^3$ then $\beta_1(M;\mathbb{Q})=3$ and 
$c_\pi^\mathbb{Q}$ is an isomorphism, 
while if $\pi\cong{F(3)}$ then $\beta_1(M;\mathbb{Q})=3$ and $c_\pi^\mathbb{Q}=0$.

The case $p=2$ is different.
If $\pi\cong\mathbb{Z}/2\mathbb{Z}$ then 
$\beta_1(M;\mathbb{F}_2)=1$ and $c_\pi^2$ is an isomorphism.
On the other hand,  if $\beta=\beta_1(M;\mathbb{F}_2)>1$ then
$\dim_F\mathrm{Ker}(c_\pi^2)\geq\binom{\beta+1}2-\beta>0$,
and so $\mathrm{Ker}(c_\pi^F)\not=0$.

Lemma 1 also has implications for the integral case.
If $\beta\leq2$ then $c$ must have image in $tH$,
while if $\beta=3$ then either $c$ has image in $tH$ or it maps 
$\wedge_1H^*$ onto $H/tH$.

\section{cup product and universal coefficients}

Let $p$ be a prime.
The image of $H^1(G;\mathbb{Z})$ in $H^1(G;\mathbb{F}_p)$ is canonical, 
and the restriction of $c_G^p$  to this image  is the {\it mod}-$p$
reduction of $c_G$.
However if $G^{ab}$ has $p$-torsion this does not fully determine $c_G^p$.

We may construct 3-manifold examples illustrating this as follows.
Let $M=M(Bo;p)$ and let $\pi=\pi_1(M)$.
Then $\pi^{ab}\cong(\mathbb{Z}/p\mathbb{Z})^3$ and $X^p(\pi)=\pi'$. 
Hence $c_\pi^p$ is injective, by the criterion of \cite{Hi85}.
On the other hand, if $N=\#^3L(p,q_i)$ (for some $q_i$ such that $(q_i,p)=1$)
and $G=\pi_1(N)\cong*^3(\mathbb{Z}/p\mathbb{Z})$ then $c_G^p=0$.
Clearly $H^*(M;\mathbb{Z})\cong{H^*(N;\mathbb{Z})}$ as rings.
Moreover, we may choose the parameters $q_i$ so that $\ell_M\cong\ell_N$.
Thus $H$,  $\gamma$, $D_2$ and $\ell$ do not determine the mod-$p$ cohomology ring.

The group $\Gamma_q$ with presentation 
\[
\langle{x,y,z}\mid [x,y]=z^q,~zx=xz,~zy=yz\rangle.
\]
is the fundamental group of a $\mathbb{N}il^3$-manifold,
and $\beta_1(\Gamma_q)=2$, 
so $c_{\Gamma_q}^\mathbb{Q}=0$, by Lemma 1.
On the other hand, 
$I(\Gamma_q)=\zeta\Gamma_q\cong\mathbb{Z}$ 
is generated by the image of $z$.
Hence $[\Gamma_q,I(\Gamma_q)]=1$,
but $\Gamma_q'=qI(\Gamma_q)$, as it is generated by the image of $z^q$.
Thus if  $p$ is an odd prime which divides $q$ then
$\Gamma_q'<X^p(\Gamma_q)$ and $c_{\Gamma_q}^p$ is injective.

More explicitly, 
let $f$ and $g$ be the homomorphisms from $\Gamma_q$ to $\mathbb{Z}$
defined by $f(x)=1$, $f(y)=f(z)=0$ and $g(y)=1$, $g(x)=g(z)=0$,
and let $\overline{f}_p, \overline{g}_p:\Gamma_q\to\mathbb{F}_p$ be their
mod-$(p)$ reductions.
Then the image of $f\cup{g}$ in $H^2(\Gamma_q;\mathbb{Q})$ is 0,
but $\overline{f}_p\cup\overline{g}_p\not=0$ \cite[Theorem 1]{Li91},
for each $p$ dividing $q$.
Thus $f\cup{g}$ generates the torsion subgroup of 
$H^2(\Gamma_q;\mathbb{Z})\cong\mathbb{Z}^2\oplus\mathbb{Z}/q\mathbb{Z}$.
In this case $c_{\Gamma_q}^\mathbb{Q}=0$,  so $\mu=0$,
but $c_{\Gamma_q}\not=0$,
$c_{\Gamma_q}^p\not=0$ and $\nu_p\not=0$.

Let $h:\Gamma_q\to\mathbb{F}_p$ be defined by $h(x)=h(y)=0$ and $h(z)=1$.
Reduction mod-$(p)$ maps $H^2(\Gamma_q;\mathbb{Z})$ onto 
$H^2(\Gamma_q;\mathbb{F}_p)$,  and the latter group has basis
\[
\{\overline{f}_p\cup{h},\overline{g}_p\cup{h},\overline{f}_p\cup\overline{g}_p\}.
\]
The first two elements are reductions of cohomology classes of infinite order.
However, $h$ does not lift to a homomorphism to $\mathbb{Z}$,
and these classes are not cup products of elements of $H^1(\Gamma_q;\mathbb{Z})$.
Thus we may have $\mathrm{Im}(c)\leq{tH}$, and so 
cannot treat the torsion and torsion-free parts separately.

\newpage

\end{document}